\newtheorem{theorem}{Theorem}
\newtheorem{lemma}{Lemma}
\newtheorem{example}{Example}
\newtheorem{remark}{Remark}
\newcounter{assumcounter}
\newtheorem{assum}[assumcounter]{Assumption}
\newcounter{behavcounter}
\newtheorem{behav}[behavcounter]{Behavior}
\newcommand{\R}{\mathbb{R}}
\newcommand{\N}{\mathbb{N}}
\newcommand{\C}{\mathcal{C}}
\newcommand{\calN}{\mathcal{N}}
\newcommand{\calL}{\mathcal{L}}
\newcommand{\eps}{\varepsilon}
\newcommand{\spd}{sym.\ pos.\ def.\ }
\newcommand{\id}{\mathbf{I}}
\DeclareMathOperator*{\conv}{conv}
\DeclareMathOperator*{\spn}{span}
\begin{document}

\title{Technical results on the convergence of quasi-Newton methods for nonsmooth optimization}

\author*[1]{\fnm{Bennet} \sur{Gebken}}\email{bennet.gebken@cit.tum.de}

\affil*[1]{\orgdiv{Department of Mathematics}, \orgname{Technical University of Munich}, \orgaddress{\street{Boltzmannstr. 3}, \city{Garching b. München}, \postcode{85748}, \country{Germany}}}

\abstract{
    It is well-known by now that the BFGS method is an effective method for minimizing nonsmooth functions. However, despite its popularity, theoretical convergence results are almost non-existent. One of the difficulties when analyzing the nonsmooth case is the fact that the secant equation forces certain eigenvalues of the quasi-Newton matrix to vanish, which is a behavior that has not yet been fully analyzed. In this article, we show what kind of behavior of the eigenvalues would be sufficient to be able to prove the convergence for piecewise differentiable functions. More precisely, we derive assumptions on the behavior from numerical experiments and then prove criticality of the limit under these assumptions. Furthermore, we show how quasi-Newton methods are able to explore the piecewise structure. While we do not prove that the observed behavior of the eigenvalues actually occurs, we believe that these results still give insight, and a certain intuition, for the convergence for nonsmooth functions.
}

\keywords{Nonsmooth optimization, Nonconvex optimization, quasi-Newton, BFGS}
\pacs[MSC Classification]{90C30, 65K05, 65K10}

\maketitle

\section{Introduction} \label{sec:introduction}

    Quasi-Newton methods \cite{D1959,NW2006} are among the most popular methods for minimizing smooth functions. Their core idea is to replace the inverse Hessian matrix in Newton's method by a \spd approximation $H_{k+1} \in \R^{n \times n}$, the \emph{quasi-Newton matrix},
    satisfying the \emph{(inverse) secant equation}
    \begin{align} \label{eq:secant_equation}
        H_{k+1}(\nabla f(x^{k+1}) - \nabla f(x^k)) = x^{k+1} - x^k,
    \end{align}
    and to then use $p^{k+1} = -H_{k+1} \nabla f(x^{k+1})$ as a search direction at $x^{k+1}$. Despite only using first-order derivatives, these methods are able to achieve superlinear convergence (see, e.g., \cite{NW2006}, Theorem 6.6), which makes them highly desirable for smooth optimization. However, surprisingly, it can be observed empirically that quasi-Newton methods, specifically the BFGS method, also work well for nonsmooth optimization, typically converging with a linear rate. This is surprising, since Newton's method, which these methods arguably try to mimic, fails even on simple nonsmooth functions (like convex piecewise linear functions, where it fails like gradient descent \cite{AO2019}).
    In particular, quasi-Newton methods do not contain any classic technique for handling nonsmoothness, like bundling \cite{BKM2014} or gradient sampling \cite{BLO2005}.
    Their convergence was first commented on by Lemaréchal in \cite{L1982} and was popularized by Lewis and Overton in \cite{LO2013}, who posed a challenge to provide the theoretical reason for it (\cite{LO2013}, Challenge 7.1).

    Despite the popularity of quasi-Newton methods for nonsmooth optimization, there are only few theoretical convergence results: In \cite{LO2013}, Theorem 3.2, the convergence of a general quasi-Newton method with exact line search applied to the Euclidean norm function $f : \R^2 \rightarrow \R$, $x \mapsto \| x \|$, on $\R^2$ is proven. Furthermore, in Section 5.1, convergence with an inexact Wolfe line search is proven for the absolute value function $f : \R \rightarrow \R$, $x \mapsto | x |$. In \cite{GL2018}, Corollary 4.2, the convergence of the BFGS method with Wolfe step length is proven for the Euclidean norm on $\R^n$ for arbitrary $n$. In \cite{LZ2015}, Proposition 4.2, and \cite{XW2017}, Theorem IV.1 and Remark IV.2, it is shown that for certain unbounded below, piecewise linear functions, the BFGS method with inexact Wolfe line search does not converge to a non-critical point. (Related results for the limited-memory BFGS method are proven in \cite{AO2020,AO2021}.) However, even for simple nonsmooth functions like $f : \R^2 \rightarrow \R$, $x \mapsto x_1^2 + |x_2|$ from \cite{GL2018}, the convergence of the BFGS method is not yet understood.

    In the smooth case, the standard convergence theory for the BFGS method (see, e.g., \cite{NW2006}, Theorem 6.5) is based on estimates for the smallest and largest eigenvalue of $(H_k)_k$. In the nonsmooth case, for a sequence $(x^k)_k$ with limit $\bar{x}$, the difference of iterates on the right-hand side of the secant equation \eqref{eq:secant_equation} vanishes, but the difference of gradients on the left-hand side may not vanish. This means that \eqref{eq:secant_equation} forces certain eigenvalues of $(H_k)_k$ to vanish (with the corresponding eigenvectors being the discontinuous jumps of $\nabla f$ locally around $\bar{x}$). As a result, it is unclear how the convergence theory from the smooth case can be generalized. Furthermore, the condition number of $(H_k)_k$ becomes unbounded, which causes numerical issues due to limited machine precision.
 
    In this article, we skip the theoretical analysis of the eigenvalues of $(H_k)_k$ and instead show what behavior of $(H_k)_k$ would be sufficient to prove certain convergence results of quasi-Newton methods.
    We restrict ourselves to a well-behaved subclass of the class of piecewise differentiable functions \cite{S2012}, since for these functions, the above-mentioned discontinuous jumps of $\nabla f$ simply correspond to jumps between the different areas in which $f$ is smooth. The first of two main results (Theorem \ref{thm:criticality}) shows that if the generated sequence $(x^k)_k$ has a limit $\bar{x}$ and if only those eigenvalues that are forced to vanish by \eqref{eq:secant_equation} vanish (Behavior \ref{behav:B1}), then $\bar{x}$ is Clarke critical. The second main result (Theorem \ref{thm:exploration}) shows that if the initial point $x^0$ is close enough to the global minimum and small eigenvalues of $(H_k)_k$ stay small for a certain number of iterations (Behavior \ref{behav:B2}), then $(x^k)_k$ visits each of the $m$ smooth pieces of $f$ exactly once in the first $m$ iterates. This gives insight into the way in which quasi-Newton methods are able to explore the structure of nonsmooth functions, and can also be used to explain the behavior that occurs when restarts are introduced.
    From a technical point of view, the theory in this article is based around showing that in certain situations, the gradients of the selection functions of $f$ are contained in the kernels of accumulation points of $(H_k)_k$, which connects the eigenvalues and eigenvectors of $(H_k)_k$ to derivative information at the limit $\bar{x}$.
    
    We emphasize that our two main results fully rely on the behavioral assumptions \ref{behav:B1} and \ref{behav:B2} holding, which we \emph{do not} prove in this article. While numerical experiments (see Example \ref{example:criticality} and Example \ref{example:exploration}) suggest that they generically hold for a relatively general class of functions, there does not appear to be a way to actually prove them. (As discussed in Remark \ref{remark:challenge} below, the assumption \ref{behav:B1} is closely related to 4.\ in  Challenge 7.1 in \cite{LO2013}. See also the discussion in Section \ref{sec:outlook}.) However, we believe that the theory in this article still gives a certain intuition for why quasi-Newton methods work for nonsmooth functions. In particular, it shows how the secant equation ``encodes'' nonsmooth information into the quasi-Newton matrix, and how this yields descent directions with sufficient decrease without any explicit bundling or gradient sampling (and without the need to solve subproblems). Furthermore, \ref{behav:B1} and \ref{behav:B2} may serve as waypoints when developing a convergence theory from the ground up.

    Matlab scripts for the reproduction of all numerical experiments shown in this article are available at \url{https://github.com/b-gebken/Nonsmooth-BFGS-experiments}. To avoid any issues related to machine precision, we use Matlab's variable precision arithmetic (\verb+vpa+) with $500$ significant digits for some experiments.

    The rest of this article is organized as follows: In Section \ref{sec:preliminaries} we introduce the basics of quasi-Newton methods and piecewise differentiable functions. In Section \ref{sec:limit_critical} we first discuss the required assumptions on the asymptotic behavior of $(H_k)_k$ for $k \rightarrow \infty$ and afterwards prove the first main result, regarding the criticality of the limit. In Section \ref{sec:explore_structure} we discuss the non-asymptotic behavior of $(H_k)_k$ close to the minimum and then prove the second main result, regarding the exploration of the nonsmooth structure. Here, we also briefly consider the behavior of the BFGS methods with restarts on nonsmooth functions. Finally, we discuss open questions and possible directions for future research in Section \ref{sec:outlook}. 

\section{Preliminaries} \label{sec:preliminaries}

    In this section, we introduce the basics of quasi-Newton methods and piecewise differentiable functions. For more detailed introductions, we refer to \cite{NW2006}, Chapter 6, and \cite{S2012}, Chapter 4, respectively. We will also use basic results about affine independence throughout the article, which can be found, e.g., in \cite{B1983}, Section \S 1.

    \subsection{Quasi-Newton methods}

        Consider a function $f : \R^n \rightarrow \R$ and denote the set of points at which $f$ is not differentiable by $\Omega$. For $x \notin \Omega$, $p \in \R^n$ with $\nabla f(x)^\top p < 0$, and $c_1, c_2 \in (0,1)$, $c_1 < c_2$, a step length $t > 0$ satisfies the \emph{Wolfe conditions}, if $x + t p \notin \Omega$ and
        \begin{align}
            f(x + t p) &\leq f(x) + c_1 t \nabla f(x)^\top p, \label{eq:Wolfe_1} \tag{W1} \\
            \nabla f(x + t p)^\top p &\geq c_2 \nabla f(x)^\top p. \label{eq:Wolfe_2} \tag{W2}
        \end{align}
        By \cite{LO2013}, Theorem 4.5, if $f$ is locally Lipschitz continuous, then a step length satisfying the Wolfe conditions exists. Computing the next iterate $x^{k+1} = x^k + t p^k$ via a Wolfe step length guarantees that a \spd matrix $H_{k+1}$ satisfying \eqref{eq:secant_equation} exists (cf.\ (6.8) in \cite{NW2006}). This shows that the general quasi-Newton method (including a differentiability check) denoted in Alg.\ \ref{algo:QN} is well-defined.
        \begin{algorithm} 
            \caption{Quasi-Newton method}
            \label{algo:QN}
            \begin{algorithmic}[1] 
                \Require Initial point $x^0 \in \R^n \setminus \Omega$, initial \spd matrix $H_0 \in \R^{n \times n}$, Wolfe parameters $c_1, c_2 \in (0,1)$, $c_1 < c_2$.
                \For{$k = 0, 1, \dots$}
                    \State If $\nabla f(x^{k}) = 0$ then stop.
                    \State Set $p^k = - H_k \nabla f(x^k)$.
                    \State Set $x^{k+1} = x^k + t_k p^k$, where $t_k$ satisfies the Wolfe conditions \eqref{eq:Wolfe_1} and \eqref{eq:Wolfe_2}.
                    \State If $f$ is not differentiable at $x^{k+1}$ then stop. \label{state:breakdown}
                    \State For $y^k = \nabla f(x^{k+1}) - \nabla f(x^k)$ and $s^k = x^{k+1} - x^k$, compute \label{state:QN_update}
                    \Statex \hspace{\algorithmicindent}a \spd matrix $H_{k+1}$ with $H_{k+1} y^k = s^k$.
                \EndFor
            \end{algorithmic}
        \end{algorithm}
        As in \cite{LO2013}, if the algorithm stops in Step \ref{state:breakdown} with $x^{k+1} \in \Omega$, then we say that it \emph{breaks down}. For $k \in \N \cup \{ 0 \}$ we denote the eigenvalues of $H_k$ by $0 < \lambda_1^k \leq \dots \leq \lambda_n^k$. There are many ways for computing the matrix $H_{k+1}$ in Step \ref{state:QN_update} of Alg.\ \ref{algo:QN}. For all numerical experiments in this article, we use the \emph{BFGS update}, where
        \begin{align} \label{eq:BFGS_update} 
            H_{k+1} = V_k H_k V_k^\top + \frac{s^k (s^k)^\top}{(s^k)^\top y^k} \text{ for } V_k = \id - \frac{s^k (y^k)^\top}{(s^k)^\top y^k},
        \end{align}
        with $\id \in \R^{n \times n}$ denoting the identity matrix. Throughout the article, the term \emph{BFGS method} refers to the method that results from using \eqref{eq:BFGS_update} for Step \ref{state:QN_update} of Alg.\ \ref{algo:QN}. For computing the Wolfe step length in the numerical experiments, we use Alg.\ 4.6 from \cite{LO2013}. (However, for our theoretical results, the specific way in which the step length is computed does not matter.) Finally, as in \cite{LO2013}, we do not actually check for differentiability in practice, since there is no reliable way to do so in a numerical setting.

    \subsection{Piecewise differentiable functions}

        A function $f : \R^n \rightarrow \R$ is called a \emph{continuous selection} of the functions $f_i : \R^n \rightarrow \R$, $i \in I := \{1,\dots,m\}$, $m \in \N$, on $\R^n$, if $f$ is continuous and 
        \begin{align*}
            f(x) \in \{ f_i(x) : i \in I \} \quad \forall x \in \R^n.
        \end{align*}
        The functions $f_i$ are referred to as \emph{selection functions} of $f$. A selection function $f_i$ is called \emph{active} at $x$ if $f(x) = f_i(x)$. The \emph{active set} at $x$ is defined by $I(x) := \{ i \in I : f(x) = f_i(x) \}$ and the \emph{essentially active set} is defined by
        \begin{align*}
            I_e(x) := \{ i \in I : x \in \mathrm{cl}(\mathrm{int}(\{y \in \R^n : f(y) = f_i(y)\})) \}.
        \end{align*}
        In the following, let $f$ be a continuous selection of $\C^1$-functions. By \cite{U2002}, Proposition 2.24, for every $x \notin \Omega$, the set
        \begin{align} \label{eq:def_Ag}
            I_g(x) := \{ i \in I_e(x) : \nabla f(x) = \nabla f_i(x) \}
        \end{align}
        is non-empty. 
        By \cite{S2012}, Corollary 4.1.1, $f$ is locally Lipschitz continuous. By \cite{S2012}, Proposition 4.3.1, the Clarke subdifferential \cite{C1990} of $f$ at $x$ is given by $\partial f(x) = \conv(\{ \nabla f_i(x) : i \in I_e(x) \})$. We say that $x$ is a \emph{(Clarke) critical point} of $f$ if $0 \in \partial f(x)$, which is a necessary condition for local optimality (cf.\ \cite{BKM2014}, Theorem 3.17). Finally, if $f$ is a function such that for every $x \in \R^n$, there is an open neighborhood $U \subseteq \R^n$ of $x$ such that the restriction $f|_U$ is a continuous selection of $\C^1$-functions, then $f$ is called \emph{piecewise differentiable}.

\section{Criticality of the limit} \label{sec:limit_critical}

    In this section, we analyze the criticality of the limit (if it exists) of a sequence generated by Alg.\ \ref{algo:QN} for a piecewise differentiable function. Since criticality is a local property, it is sufficient to consider continuous selections of $\C^1$-functions (with a fixed set of selection functions). Since we later want to use a result that generalizes part of the proof of Zoutendijk's theorem (see, e.g., \cite{NW2006}, Theorem 3.2), we further have to assume that these $\C^1$-functions have a locally Lipschitz continuous gradient. More formally, we consider the following class of functions:
    \begin{assum} \label{assum:A1}
        The function $f : \R^n \rightarrow \R$ is a continuous selection of $\C^1$-functions $f_i$, $i \in I = \{1,\dots,m\}$, whose gradients are locally Lipschitz continuous.
    \end{assum}

    In the following, we introduce the assumptions we make for the behavior of Alg.\ \ref{algo:QN} to be able to analyze its convergence. First of all, clearly, it is only relevant to consider the convergence if the algorithm does not break down and the generated sequence $(x^k)_k$ is infinite. Furthermore, we have to assume that $(x^k)_k$ has a limit $\bar{x}$, since even when $f$ is smooth (with $m = 1$), there are examples where $(x^k)_k$ cycles between non-critical points of $f$ \cite{D2002,D2013}. In addition, for ease of notation, we assume that every selection function is active infinitely many times along $(x^k)_k$, as otherwise, it would suffice to consider a continuous selection of a subset of $\{ f_i : i \in I \}$. Regarding the eigenvalues of $(H_k)_k$, note that if $i \in I_g(x^k)$ and $j \in I_g(x^{k+1})$ with $i \neq j$, then for large $k$, the vector $\nabla f_j(x^{k+1}) - \nabla f_i(x^k)$ is mapped to ``almost zero'' by $H_{k+1}$ due to the secant equation \eqref{eq:secant_equation}. For $k \rightarrow \infty$, these vectors belong to the set
    \begin{align} \label{eq:def_calN}
        \calN(\bar{x}) := \spn(\{ \nabla f_i(\bar{x}) - \nabla f_1(\bar{x}) : i \in \{2,\dots,m\} \}).
    \end{align}
    (Note that this definition is independent of the choice of the fixed index $1$.) If the quasi-Newton update is done in a way such that $H_{k+1}$ not only satisfies the current secant equation \eqref{eq:secant_equation}, but also ``memorizes'' previous ones, then $\calN(\bar{x})$ would approximately belong to the kernel of $H_k$ for large $k$. In particular, the number of approximately zero eigenvalues of $H_k$ would be at least $\dim(\calN(\bar{x}))$. The following numerical experiment suggests that for the BFGS update, this is indeed the case, with the number of approximately zero eigenvalues being exactly $\dim(\calN(\bar{x}))$:
    \begin{example} \label{example:criticality}
        For $m \in \{1,\dots,n+1\}$ and $I = \{1,\dots,m\}$, consider the strongly convex function
        \begin{align*}
            f : \R^n \rightarrow \R,
            \quad
            x \mapsto \max_{i \in I} \left( g_i^\top x + \frac{1}{2} x^\top M_i x + \frac{d_i}{24} \| x \|^4 \right)
        \end{align*}
        from \cite{LW2019}, p.\ 26, where $d_i > 0$ for all $i \in I$, $M_i \in \R^{n \times n}$ is \spd for all $i \in I$, and the vectors $g_i \in \R^n$, $i \in I$, are affinely independent with $0 \in \conv(\{ g_i : i \in I \})$. The global minimal point of this function is $x^* = 0 \in \R^n$ with minimal value $f(x^*) = 0$, and we have $\dim(\calN(x^*)) = m-1$.
        
        We generate $10$ random instances of this function with $n = 10$ and $m = 6$, and apply $1000$ iterations of the BFGS method with Wolfe parameters $c_1 = 10^{-4}$ and $c_2 = 0.5$ (the default values in the \emph{HANSO}\footnote{\url{https://cs.nyu.edu/~overton/software/hanso/}} software package), a random initial point $x^0$, and a random initial matrix $H_0$ for every run. (For details on the random generation, see the code that is referenced in Section \ref{sec:introduction}.) The results are computed with $500$ significant digits via Matlab's variable-precision arithmetic.
        \begin{figure}
            \centering
            \parbox[b]{0.32\textwidth}{
                \centering 
                \includegraphics[width=0.32\textwidth]{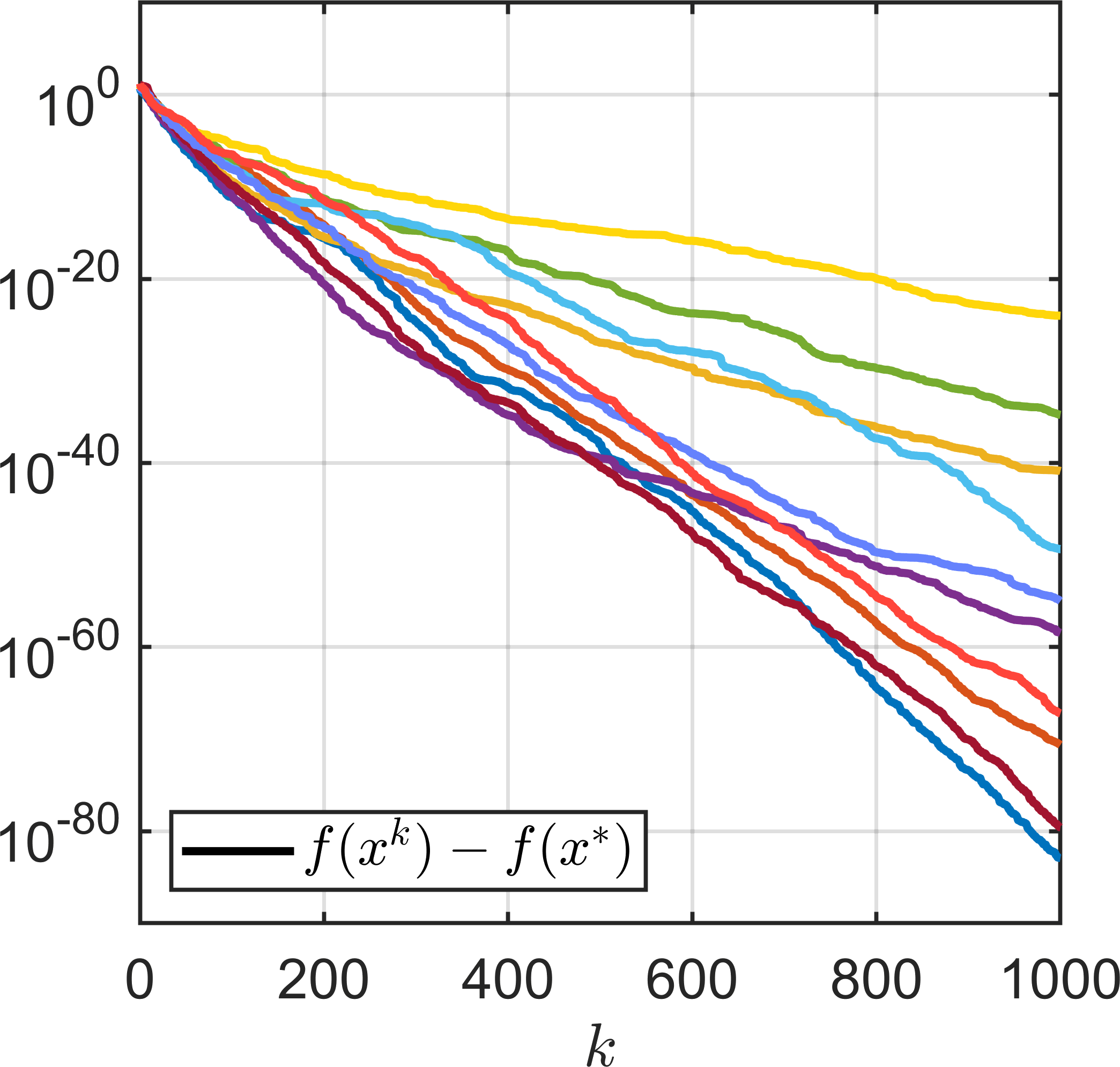}\\
                (a)
    		}
            \parbox[b]{0.32\textwidth}{
                \centering 
                \includegraphics[width=0.32\textwidth]{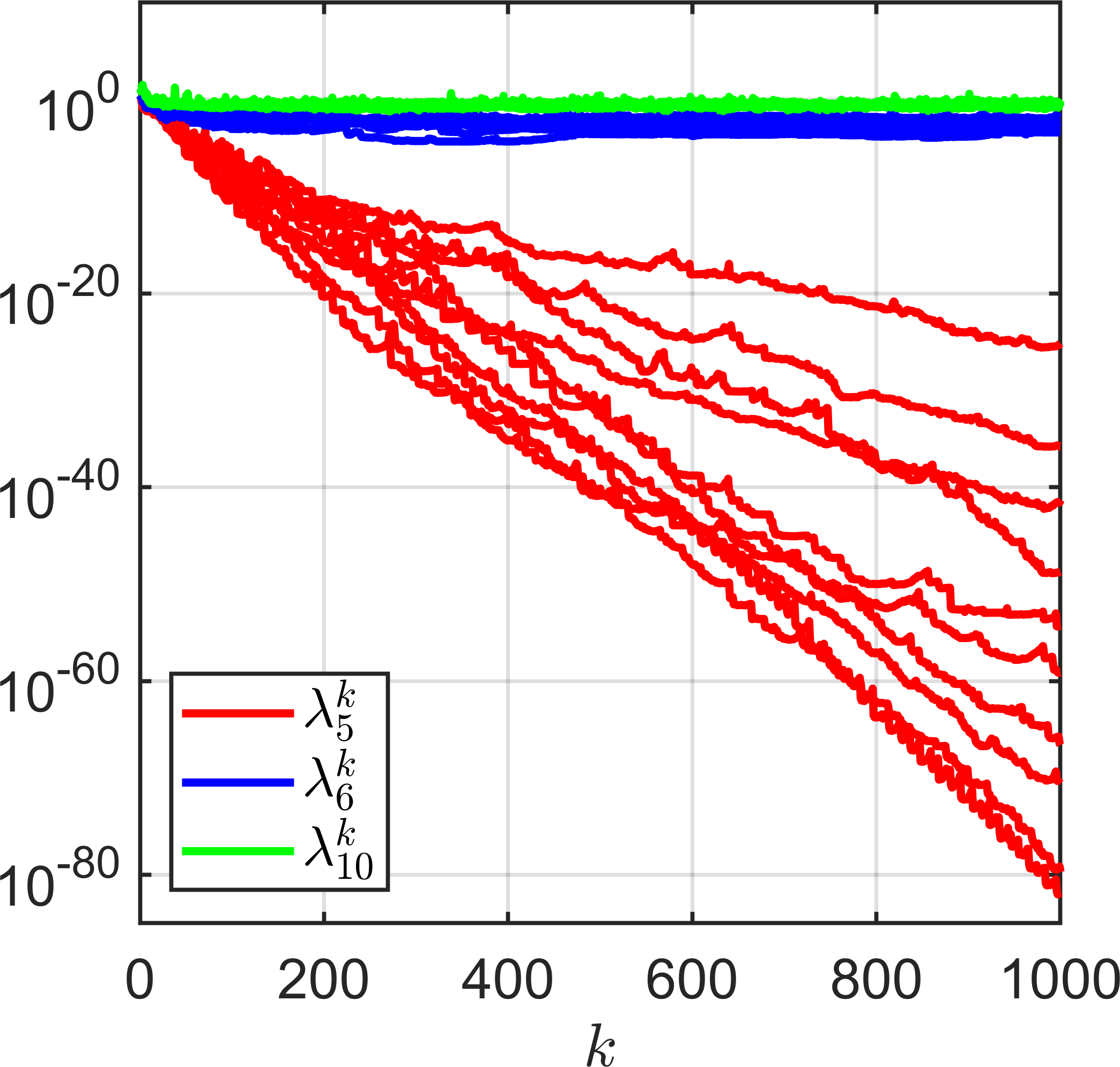}\\
                (b)
    		}
            \parbox[b]{0.32\textwidth}{
                \centering 
                \includegraphics[width=0.32\textwidth]{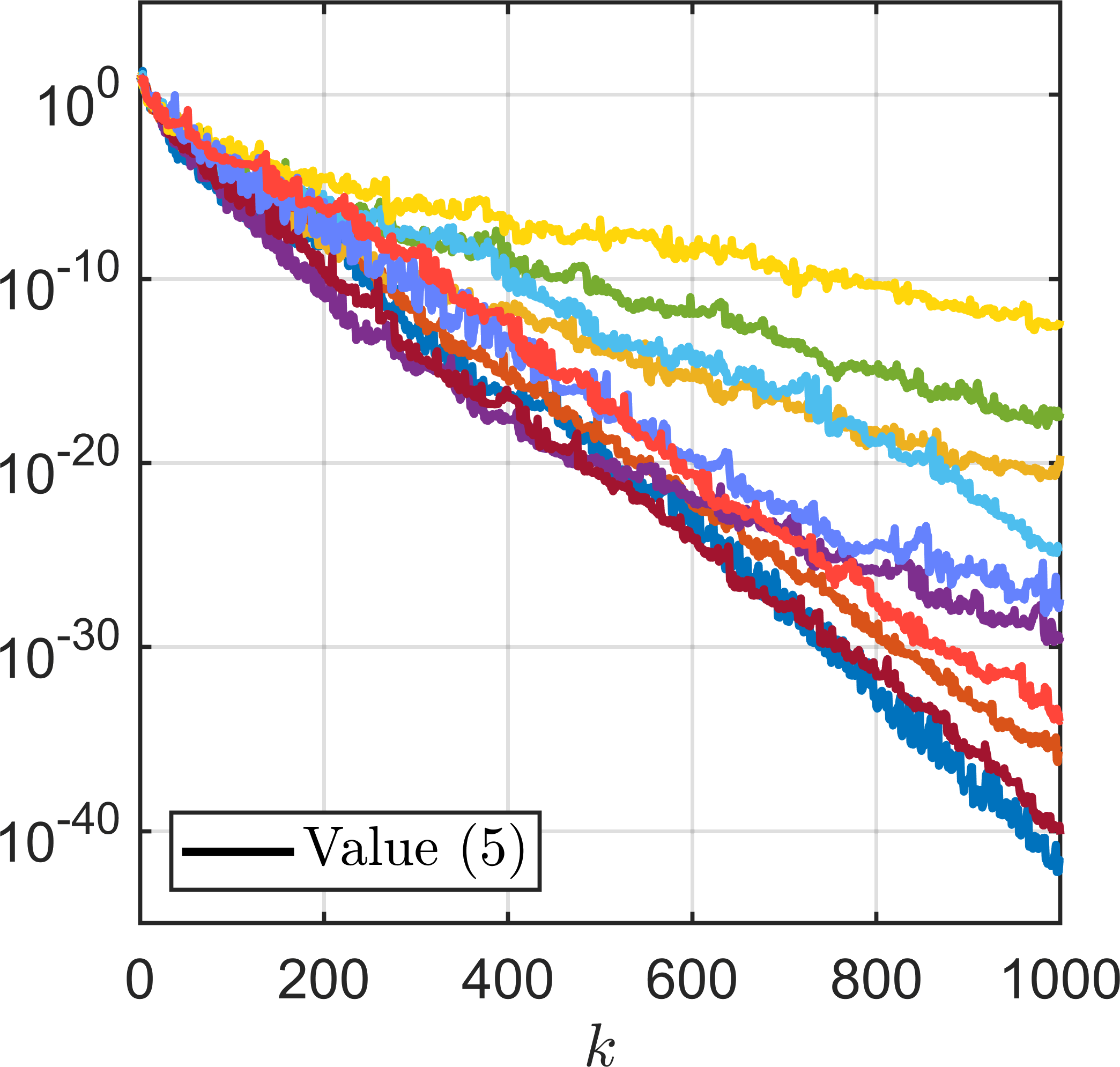}\\
                (c)
    		}
            \caption{(a) The distance of $f(x^k)$ to $f(x^*)$ in Example \ref{example:criticality}. Each color corresponds to one run of the BFGS method for one problem instance. (b) The eigenvalues $\lambda_5^k$, $\lambda_6^k$, and $\lambda_{10}^k$ of $H_k$ for each run. (c) The value \eqref{eq:secant_memory_numerically} for each run, with the same coloring as in (a).}
            \label{fig:example_criticality}
        \end{figure}
        Fig.\ \ref{fig:example_criticality}(a) shows the distance of $f(x^k)$ to the minimal value $0$, where we see the expected (roughly) R-linear rate of convergence. Fig.\ \ref{fig:example_criticality}(b) shows an eigenvalue gap from $\lambda^k_5$ to $\lambda^k_6$, with $\lambda^k_5$ vanishing and $\lambda^k_6$ being bounded away from zero. Furthermore, the largest eigenvalue $\lambda_{10}^k$ appears to be bounded above. Finally, for every $k$, Fig.\ \ref{fig:example_criticality}(c) shows the value
        \begin{align} \label{eq:secant_memory_numerically}
            \max_{i \in \{2,\dots,m\}} \| H_k (\nabla f_i(\bar{x}) - \nabla f_1(\bar{x})) \|
        \end{align}
        with $\bar{x} = 0 \in \R^n$, which appears to vanish as $k$ increases.
    \end{example}

    More formally, the above discussion motivates the following assumptions on the behavior of Alg.\ \ref{algo:QN}:
    \begin{behav} \label{behav:B1}
        Assume that Alg.\ \ref{algo:QN} is applied to a function $f$ satisfying \ref{assum:A1} and generates a sequence $(x^k)_k$ and corresponding quasi-Newton matrices $(H_k)_k$ such that
        \begin{enumerate}[leftmargin=1.3cm,label=(B1.\arabic*)]
            \item \label{enum:B1_1} $(x^k)_k$ is an infinite sequence, i.e., Alg.\ \ref{algo:QN} did not break down and $\nabla f(x^k) \neq 0$ for all $k \in \N$,
            \item \label{enum:B1_2} $(x^k)_k$ has a limit $\bar{x} \in \R^n$,
            \item \label{enum:B1_3} for all $i \in I$, the set $\{ k \in \N : i \in I_g(x^k) \}$ is infinite,
            \item \label{enum:B1_4} there are $\sigma_L, \sigma_U \in \R^{>0}$ such that 
            \begin{align*}
                \lambda_j^k \in [\sigma_L,\sigma_U] \quad \forall j \in \{\dim(\calN(\bar{x}))+1,\dots,n\}, k \in \N,
            \end{align*}
            \item \label{enum:B1_5} it holds
            \begin{align*}
                \lim_{k \rightarrow \infty} H_k (\nabla f_i(\bar{x}) - \nabla f_1(\bar{x})) = 0 \quad \forall i \in \{2,\dots,m\}.
            \end{align*}
        \end{enumerate}
    \end{behav}

    In the following remark, we briefly discuss \ref{behav:B1} in light of Challenge 7.1 in \cite{LO2013}:

    \begin{remark} \label{remark:challenge}
        The assumption \ref{enum:B1_1} corresponds to 1.\ in Challenge 7.1 of \cite{LO2013}. If $f$ is a convex max-function with minimum $x^*$, then in terms of $\mathcal{V}\mathcal{U}$\emph{-decomposition} \cite{LS2020}, the set $\calN(x^*)$ is the $\mathcal{V}$-space (cf.\ \cite{MS1999}, Proposition 1). Alternatively, in terms of \emph{partial smoothness} \cite{L2002}, $\calN(x^*)$ is the normal space $N_{\mathcal{M}}(x^*)$ (cf.\ \cite{L2002}, Theorem 6.1). As such, the assumptions \ref{enum:B1_4} and \ref{enum:B1_5} are closely related to 4.\ in the challenge of \cite{LO2013}. So roughly speaking, in case $f$ is a convex max-function, we are assuming that 1.\ and 4.\ of this challenge hold and analyze 2. with $(x^k)_k$ having a limit. However, note that we are in a deterministic setting, whereas the challenge is posed in a stochastic way.
    \end{remark}

    Our strategy for analyzing the criticality of $\bar{x}$ is based on the intermediate result that $\nabla f_i(\bar{x}) \in \calN(\bar{x})$ for all $i \in I$ (Lemma \ref{lem:intermediate_result} below). Since $\dim(\calN(\bar{x})) \leq m-1$, this implies that the vectors $\nabla f_i(\bar{x})$, $i \in I = \{1,\dots,m\}$, are linearly dependent. While this is merely a (relatively weak) necessary condition for criticality for functions satisfying \ref{assum:A1}, we later consider a subclass of these functions for which it is sufficient (see \ref{assum:A2} below).

    To first prove the intermediate result, we require three technical lemmas. The first one is concerned with the decrease of \emph{all} selection functions along the search direction $p^k$ at $x^k$, not just the active selective function:
    \begin{lemma} \label{lem:common_descent}
        Assume that $f$ satisfies \ref{assum:A1} and that \ref{behav:B1} holds. Then
        \begin{align*}
            \lim_{k \rightarrow \infty} \nabla f_i(x^k)^\top p^k - \nabla f_1(x^k)^\top p^k = 0
            \quad \forall i \in \{2,\dots,m\}.
        \end{align*}
    \end{lemma}
    \begin{proof}
        For $k \in \N$ let $i_k \in I_g(x^k)$. Let $i \in \{2,\dots,m\}$. By definition of $p^k$ we have
        \begin{align*}
            \nabla f_i(x^k)^\top p^k - \nabla f_1(x^k)^\top p^k
            = -(\nabla f_i(x^k) - \nabla f_1(x^k))^\top H_k \nabla f_{i_k}(x^k).
        \end{align*}
        Since $x^k \rightarrow \bar{x}$ (by \ref{enum:B1_2}), $(H_k)_k$ is bounded (w.r.t.\ the spectral norm, by \ref{enum:B1_4}), and the selection functions are $\C^1$ (by \ref{assum:A1}), \ref{enum:B1_5} implies 
        \begin{align*}
            &(\nabla f_i(x^k) - \nabla f_1(x^k))^\top H_k \\
            &= (\nabla f_i(\bar{x}) - \nabla f_1(\bar{x}))^\top H_k + (\nabla f_i(x^k) - \nabla f_1(x^k) - (\nabla f_i(\bar{x}) - \nabla f_1(\bar{x})))^\top H_k
            \rightarrow 0
        \end{align*}
        as $k \rightarrow \infty$, completing the proof.
    \end{proof}

    Lemma \ref{lem:common_descent} shows that all selection functions have approximately the same directional derivative along the search direction for large $k$. 
    The second lemma derives a formula for a lower bound for step lengths satisfying the Wolfe conditions \eqref{eq:Wolfe_1} and \eqref{eq:Wolfe_2}. In words, it shows that if $p^k$ is not only a descent direction for the selection function that is active at $x^k$, but also yields sufficient decrease for the selection function that is active at $x^k + t_k p^k$, then there is a lower bound for $t_k$. It generalizes the lower bound for Wolfe step lengths that is derived in the proof of Theorem 3.2 (Zoutendijk's theorem) in \cite{NW2006} (cf.\ the third inequality in that proof).
    \begin{lemma} \label{lem:lower_bound_Wolfe}
        Assume that $f$ satisfies \ref{assum:A1} and let $x \notin \Omega$. Let $c_2 \in (0,1)$ and $p \in \R^n$ with $\nabla f(x)^\top p < 0$. Let $t$ be a step length satisfying the second Wolfe condition \eqref{eq:Wolfe_2}. For $i \in I_g(x + t p)$ let $L$ be a Lipschitz constant of $\nabla f_i$ on $\conv(\{x,x + tp\})$. Then
        \begin{align*}
            t \geq 
            \frac{1}{L \| p \|^2}
            \left( 
            -(1 - c_2) \nabla f(x)^\top p
            +
            (\nabla f(x) - \nabla f_{i}(x))^\top p
            \right).
        \end{align*}
    \end{lemma}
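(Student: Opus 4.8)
The plan is to start from the second Wolfe condition \eqref{eq:Wolfe_2} and exploit that $i \in I_g(x + tp)$ in order to rewrite both sides of the inequality in terms of the single smooth selection function $f_i$. By the definition of $I_g$ in \eqref{eq:def_Ag}, membership $i \in I_g(x + tp)$ yields $\nabla f(x + tp) = \nabla f_i(x + tp)$. Substituting this into \eqref{eq:Wolfe_2} gives
\[
    \nabla f_i(x + tp)^\top p \geq c_2\, \nabla f(x)^\top p.
\]
This is the step where the essentially active index at the new iterate enters; note that it is crucial to use $i \in I_g(x+tp)$ (so that $\nabla f_i(x+tp)$ is the \emph{actual} gradient $\nabla f(x+tp)$) rather than just $i \in I(x+tp)$.

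Next I would transport the left-hand side back to the point $x$ using the Lipschitz continuity of $\nabla f_i$ guaranteed by \ref{assum:A1}. By Cauchy--Schwarz together with the Lipschitz estimate $\| \nabla f_i(x + tp) - \nabla f_i(x) \| \leq L \| tp \| = L t \| p \|$ on the segment $\conv(\{x, x + tp\})$, we obtain $\nabla f_i(x + tp)^\top p \leq \nabla f_i(x)^\top p + L t \| p \|^2$. Combining this with the inequality from the previous paragraph produces
\[
    c_2\, \nabla f(x)^\top p \leq \nabla f_i(x)^\top p + L t \| p \|^2,
\]
and solving for $t$ (recalling $L \|p\|^2 > 0$) gives $t \geq \frac{1}{L\|p\|^2}\bigl(c_2\, \nabla f(x)^\top p - \nabla f_i(x)^\top p\bigr)$.

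Finally, I would match this to the claimed form through the purely algebraic identity $c_2\, \nabla f(x)^\top p - \nabla f_i(x)^\top p = -(1 - c_2)\nabla f(x)^\top p + (\nabla f(x) - \nabla f_i(x))^\top p$, which is verified by expanding the right-hand side. I do not expect a genuine obstacle here, since the argument is an elementary chain of the Wolfe inequality, a Lipschitz bound, and a rearrangement; the only points demanding care are taking $L$ as a Lipschitz constant of the specific $\nabla f_i$ on the correct segment and justifying the replacement $\nabla f(x+tp) = \nabla f_i(x+tp)$ via $I_g$. Conceptually, the result generalizes the classical Zoutendijk lower bound by retaining the extra gap term $(\nabla f(x) - \nabla f_i(x))^\top p$, which quantifies the mismatch between the gradient of the piece active at $x$ and that of the piece $i$ active at $x + tp$.
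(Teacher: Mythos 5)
Your proposal is correct and follows essentially the same route as the paper's proof: substitute $\nabla f(x+tp) = \nabla f_i(x+tp)$ into the second Wolfe condition, apply the Lipschitz bound $(\nabla f_i(x+tp) - \nabla f_i(x))^\top p \leq tL\|p\|^2$, and rearrange algebraically. The only difference is the trivial order in which the rearrangement and the Lipschitz estimate are applied.
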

    \begin{proof}
        By the second Wolfe condition \eqref{eq:Wolfe_2} it holds $\nabla f_{i}(x + tp)^\top p \geq c_2 \nabla f(x)^\top p$. Subtracting $ \nabla f_{i}(x)^\top p$ on both sides yields
        \begin{equation} \label{eq:proof_lem_lower_bound_Wolfe_1}
            \begin{aligned}
                (\nabla f_{i}(x + tp) - \nabla f_{i}(x))^\top p
                &\geq (c_2 \nabla f(x) - \nabla f_{i}(x))^\top p \\
                &= (c_2 \nabla f(x) - \nabla f(x) + \nabla f(x) - \nabla f_{i}(x))^\top p \\
                &= -(1 - c_2) \nabla f(x)^\top p + (\nabla f(x) - \nabla f_{i}(x))^\top p.
            \end{aligned}
        \end{equation}
        Since $\nabla f_{i}$ is locally Lipschitz continuous by \ref{assum:A1}, the left-hand side of \eqref{eq:proof_lem_lower_bound_Wolfe_1} satisfies 
        \begin{align} \label{eq:proof_lem_lower_bound_Wolfe_2}
            (\nabla f_{i}(x + tp) - \nabla f_{i}(x))^\top p
            \leq \| \nabla f_{i}(x + tp) - \nabla f_{i}(x) \| \| p \|
            \leq t L \| p \|^2
        \end{align}
        for a Lipschitz constant of $\nabla f_i$ on $\conv(\{x,x + tp\})$. Combining \eqref{eq:proof_lem_lower_bound_Wolfe_1} and \eqref{eq:proof_lem_lower_bound_Wolfe_2} completes the proof.
    \end{proof}

    Finally, the third lemma shows that $\ker(\bar{H}) = \calN(\bar{x})$ for accumulation points $\bar{H}$ of $(H_k)_k$:
    \begin{lemma} \label{lem:ker_H_subsequence}
        If $f$ satisfies \ref{assum:A1} and \ref{behav:B1} holds, then $(H_k)_k$ has an accumulation point. Furthermore, for all accumulation points $\bar{H}$ of $(H_k)_k$, it holds $\ker(\bar{H}) = \calN(\bar{x})$.
    \end{lemma}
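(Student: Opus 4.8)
The plan is to first use \ref{enum:B1_4} to establish boundedness of $(H_k)_k$, so that an accumulation point exists by compactness, and then to prove the two inclusions $\calN(\bar{x}) \subseteq \ker(\bar{H})$ and $\ker(\bar{H}) \subseteq \calN(\bar{x})$ separately. The first inclusion will follow directly from \ref{enum:B1_5}, while the second will be obtained from a dimension count that invokes the lower bound in \ref{enum:B1_4} together with the continuity of eigenvalues.

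For existence, I would note that since $H_k$ is symmetric positive definite, its spectral norm equals its largest eigenvalue $\lambda_n^k$, which by \ref{enum:B1_4} is bounded above by $\sigma_U$ for all $k$. Hence $(H_k)_k$ is bounded in $\R^{n \times n}$, and by the Bolzano--Weierstrass theorem it admits a convergent subsequence $(H_{k_l})_l$ with some limit $\bar{H}$; as a limit of symmetric positive definite matrices, $\bar{H}$ is symmetric and positive semidefinite. For the inclusion $\calN(\bar{x}) \subseteq \ker(\bar{H})$, I would pass to the limit along this subsequence in \ref{enum:B1_5}: from $H_{k_l}(\nabla f_i(\bar{x}) - \nabla f_1(\bar{x})) \rightarrow 0$ and $H_{k_l} \rightarrow \bar{H}$, continuity of the matrix--vector product yields $\bar{H}(\nabla f_i(\bar{x}) - \nabla f_1(\bar{x})) = 0$ for all $i \in \{2,\dots,m\}$. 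Since these vectors span $\calN(\bar{x})$ by \eqref{eq:def_calN}, this gives $\calN(\bar{x}) \subseteq \ker(\bar{H})$.

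For the reverse inclusion it suffices, given what has already been shown, to prove the dimension bound $\dim(\ker(\bar{H})) \leq \dim(\calN(\bar{x}))$. Writing $d = \dim(\calN(\bar{x}))$ and letting $\lambda_j(\bar{H})$ denote the $j$-th smallest eigenvalue of $\bar{H}$, I would use that the sorted eigenvalues of a symmetric matrix depend continuously on its entries, so that $\lambda_j^{k_l} \rightarrow \lambda_j(\bar{H})$ for every $j$ along the subsequence. By \ref{enum:B1_4} we have $\lambda_j^{k_l} \geq \sigma_L$ for all $j \geq d+1$, hence in the limit $\lambda_j(\bar{H}) \geq \sigma_L > 0$ for all $j \geq d+1$. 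Thus $\bar{H}$ has at least $n - d$ strictly positive eigenvalues, and since $\bar{H}$ is symmetric, the dimension of its kernel equals its number of zero eigenvalues, giving $\dim(\ker(\bar{H})) \leq d$. Combined with $\calN(\bar{x}) \subseteq \ker(\bar{H})$ and $\dim(\calN(\bar{x})) = d$, this forces $\ker(\bar{H}) = \calN(\bar{x})$, as desired.

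The conceptual content lies entirely in the reverse inclusion: existence and the first inclusion are essentially immediate restatements of \ref{enum:B1_4} and \ref{enum:B1_5}. The only step requiring care is the passage $\lambda_j^{k_l} \geq \sigma_L \Rightarrow \lambda_j(\bar{H}) \geq \sigma_L$, which I would justify by the continuous dependence of the ordered spectrum on the matrix (e.g.\ via Weyl's inequality). I do not anticipate a genuine obstacle here, since this is standard, but it is the one place where more than a direct application of the hypotheses is needed.
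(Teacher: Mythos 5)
Your proposal is correct and follows essentially the same route as the paper: boundedness of the spectral norm via the upper bound in \ref{enum:B1_4} gives an accumulation point, the inclusion $\calN(\bar{x}) \subseteq \ker(\bar{H})$ follows by passing to the limit in \ref{enum:B1_5}, and the reverse inclusion is obtained from the dimension bound $\dim(\ker(\bar{H})) \leq \dim(\calN(\bar{x}))$ via the lower eigenvalue bound and continuity of eigenvalues. The only difference is cosmetic (you present the two inclusions in the opposite order), so there is nothing further to add.
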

    \begin{proof}
        By \ref{enum:B1_4} the spectral norm of $H_k$ is bounded above by $\sigma_U$ for all $k \in \N$. Thus $(H_k)_k$ must have an accumulation point $\bar{H}$. Let $(k_l)_l \subseteq \N$ be a strictly increasing, infinite sequence with $H_{k_l} \rightarrow \bar{H}$ for $l \rightarrow \infty$. Since $\lambda_j^k \geq \sigma_L$ for all $j \in \{\dim(\calN(\bar{x}))+1,\dots,n\}$, $k \in \N$, we have $\dim(\ker(\bar{H})) \leq \dim(\calN(\bar{x}))$ due to continuity of eigenvalues (see, e.g., \cite{A2013}, Theorem 5.2.2). Thus, it suffices to show that $\calN(\bar{x}) \subseteq \ker(\bar{H})$. To this end, let $v \in \calN(\bar{x})$. Then there are $\alpha_i \in \R$, $i \in \{2,\dots,m\}$, such that $v = \sum_{i = 2}^m \alpha_i (\nabla f_i(\bar{x}) - \nabla f_1(\bar{x}))$. By \ref{enum:B1_5}, we obtain
        \begin{align*}
            \bar{H} v
            = \lim_{l \rightarrow \infty} H_{k_l} v
            = \lim_{l \rightarrow \infty} \sum_{i = 2}^m \alpha_i H_{k_l} (\nabla f_i(\bar{x}) - \nabla f_1(\bar{x}))
            = 0,
        \end{align*}
        so $v \in \ker(\bar{H})$, completing the proof. 
    \end{proof}

    Combination of Lemma \ref{lem:common_descent}, Lemma \ref{lem:lower_bound_Wolfe}, and Lemma \ref{lem:ker_H_subsequence} allow us to prove the intermediate result:

    \begin{lemma} \label{lem:intermediate_result}
        If $f$ satisfies \ref{assum:A1} and \ref{behav:B1} holds, then $I_e(\bar{x}) = I = \{1,\dots,m\}$ and
        \begin{align} \label{eq:intermediate_result}
            \nabla f_i(\bar{x}) \in \calN(\bar{x}) \quad \forall i \in I.
        \end{align}
        In particular, the vectors $\nabla f_i(\bar{x})$, $i \in I$, are linearly dependent.
    \end{lemma}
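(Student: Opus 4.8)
The plan is to combine the three preceding lemmas with a Zoutendijk-type argument. I first dispose of the claim $I_e(\bar{x}) = I$: by \ref{enum:B1_3}, each $i \in I$ satisfies $i \in I_g(x^k) \subseteq I_e(x^k)$ for infinitely many $k$, i.e., $x^k$ lies in the closed set $\mathrm{cl}(\mathrm{int}(\{y : f(y) = f_i(y)\}))$ for infinitely many $k$. Since $x^k \rightarrow \bar{x}$ by \ref{enum:B1_2} and this set is closed, $\bar{x}$ lies in it as well, so $i \in I_e(\bar{x})$; as $i$ was arbitrary, $I_e(\bar{x}) = I$.

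For the inclusion \eqref{eq:intermediate_result}, the key observation is that it suffices to show $\nabla f_{i^*}(\bar{x}) \in \calN(\bar{x})$ for a single index $i^*$: since $\nabla f_{i^*}(\bar{x}) - \nabla f_1(\bar{x}) \in \calN(\bar{x})$ by definition of $\calN(\bar{x})$, this would give $\nabla f_1(\bar{x}) \in \calN(\bar{x})$ and hence $\nabla f_i(\bar{x}) = (\nabla f_i(\bar{x}) - \nabla f_1(\bar{x})) + \nabla f_1(\bar{x}) \in \calN(\bar{x})$ for every $i$. I will produce such an $i^*$ via an accumulation point $\bar{H}$ of $(H_k)_k$. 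To this end I first establish that $-\nabla f(x^k)^\top p^k = \nabla f(x^k)^\top H_k \nabla f(x^k) \rightarrow 0$. The monotone decrease from \eqref{eq:Wolfe_1} together with $f(x^k) \rightarrow f(\bar{x})$ (by continuity and $x^k \rightarrow \bar{x}$) gives $t_k (-\nabla f(x^k)^\top p^k) \rightarrow 0$. For the reverse direction, I apply Lemma \ref{lem:lower_bound_Wolfe} at $x^k$ with $p = p^k$ and $i = j_k \in I_g(x^{k+1})$: the resulting lower bound contains the cross term $(\nabla f(x^k) - \nabla f_{j_k}(x^k))^\top p^k$, which is a difference of directional derivatives of selection functions and hence tends to $0$ by Lemma \ref{lem:common_descent} (uniformly over the finitely many index choices). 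Using that $\|p^k\| = \|H_k \nabla f(x^k)\|$ and $\nabla f(x^k) = \nabla f_{i_k}(x^k)$ are bounded (by \ref{enum:B1_4} and continuity of the finitely many $\nabla f_i$ along the convergent sequence $(x^k)_k$), and that the $\nabla f_i$ admit a uniform Lipschitz constant near $\bar{x}$ by \ref{assum:A1}, the bound reads $t_k \gtrsim (1-c_2)(-\nabla f(x^k)^\top p^k) - o(1)$; multiplying by $-\nabla f(x^k)^\top p^k$ and invoking $t_k(-\nabla f(x^k)^\top p^k) \rightarrow 0$ forces $(-\nabla f(x^k)^\top p^k)^2 \rightarrow 0$.

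With $-\nabla f(x^k)^\top p^k \rightarrow 0$ in hand, I pass to a subsequence $(k_l)_l$ along which $H_{k_l} \rightarrow \bar{H}$ (an accumulation point exists by Lemma \ref{lem:ker_H_subsequence}) and along which the active index $i_{k_l} \in I_g(x^{k_l})$ is constant, equal to some $i^*$; this is possible since $I$ is finite. Then $\nabla f(x^{k_l}) = \nabla f_{i^*}(x^{k_l}) \rightarrow \nabla f_{i^*}(\bar{x})$, so $\nabla f_{i^*}(\bar{x})^\top \bar{H} \nabla f_{i^*}(\bar{x}) = \lim_{l \rightarrow \infty} (-\nabla f(x^{k_l})^\top p^{k_l}) = 0$. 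As $\bar{H}$ is symmetric positive semidefinite (a limit of symmetric positive definite matrices), this yields $\bar{H} \nabla f_{i^*}(\bar{x}) = 0$, i.e., $\nabla f_{i^*}(\bar{x}) \in \ker(\bar{H}) = \calN(\bar{x})$ by Lemma \ref{lem:ker_H_subsequence}. The reduction above then gives \eqref{eq:intermediate_result}, and since $\dim(\calN(\bar{x})) \leq m-1$, the $m$ vectors $\nabla f_i(\bar{x})$ lie in a subspace of dimension at most $m-1$ and are therefore linearly dependent.

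The step I expect to be most delicate is the derivation of $-\nabla f(x^k)^\top p^k \rightarrow 0$: one must carefully control the sign and decay of the cross term $(\nabla f(x^k) - \nabla f_{j_k}(x^k))^\top p^k$ coming from Lemma \ref{lem:lower_bound_Wolfe} (for instance by splitting into those $k$ for which the bracket in the lower bound is nonnegative and those for which it is negative, handling the latter directly since then $-\nabla f(x^k)^\top p^k$ is already dominated by the vanishing cross term), and verify the uniform boundedness of $\|p^k\|$ and of the local Lipschitz constants near $\bar{x}$ so that the Zoutendijk-type estimate closes. The remaining pieces — the closedness argument for $I_e(\bar{x})$, the subsequence extraction, and the positive-semidefinite implication ``$v^\top \bar{H} v = 0 \Rightarrow \bar{H}v = 0$'' — are routine.
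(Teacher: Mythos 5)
Your proof is correct and follows essentially the same route as the paper's: the same Zoutendijk-type argument combining Lemma \ref{lem:lower_bound_Wolfe} with Lemma \ref{lem:common_descent} to obtain $\nabla f(x^k)^\top p^k \rightarrow 0$, followed by the accumulation-point/kernel argument via Lemma \ref{lem:ker_H_subsequence}. The only (harmless) variations are that you derive the vanishing of $\nabla f(x^k)^\top p^k$ directly by multiplying the step-length lower bound by $-\nabla f(x^k)^\top p^k$ instead of arguing by contradiction, and that you run the positive-semidefinite kernel argument for a single active index and recover the remaining inclusions from the structure of $\calN(\bar{x})$, whereas the paper uses \ref{enum:B1_3} to repeat the argument for every index.
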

    \begin{proof}
        The equality $I_e(\bar{x}) = \{1,\dots,m\}$ follows from \ref{enum:B1_3} and the fact that $I_g(x^j) \subseteq I_e(x^j)$ for all $j \in \N$ (cf.\ \eqref{eq:def_Ag}). \\
        \textbf{Part 1:} We first show that $\nabla f(x^k)^\top p^k \rightarrow 0$. To this end, assume that this does not hold. Then there is some $C > 0$ and a strictly increasing, infinite sequence $(k_l)_l \subseteq \N$ with $\nabla f(x^{k_l})^\top p^{k_l} < -C$ for all $l \in \N$. Since the number of selection functions is finite, we can assume w.l.o.g.\ that there is some $i \in I$ with $i \in I_g(x^{k_l + 1})$ for all $l \in \N$. Furthermore, by local Lipschitz continuity of $\nabla f_i$, we can assume w.l.o.g.\ that there is a Lipschitz constant $L > 0$ for $\nabla f_i$ on a superset of $\{ x^{k_l} : l \in \N \}$. By Lemma \ref{lem:lower_bound_Wolfe} we have
        \begin{align*}
            t_{k_l} \geq 
            \underbrace{\frac{1}{L \| p^{k_l} \|^2}}_{\text{(I)}}
            \left( 
            \underbrace{-(1 - c_2) \nabla f(x^{k_l})^\top p^{k_l}}_{\text{(II)}}
            +
            \underbrace{(\nabla f(x^{k_l}) - \nabla f_{i}(x^{k_l}))^\top p^{k_l}}_{\text{(III)}}
            \right).
        \end{align*}
        The fraction (I) is bounded below since $(p^{k_l})_l$ is bounded above due to \ref{enum:B1_2} and \ref{enum:B1_4}. The term (II) is bounded below by $(1 - c_2)C$. The term (III) vanishes by Lemma \ref{lem:common_descent}, since
        \begin{align*}
            \nabla f(x^{k_l}) - \nabla f_{i}(x^{k_l})
            = \nabla f(x^{k_l}) - \nabla f_1(x^{k_l}) - (\nabla f_i(x^{k_l}) - \nabla f_1(x^{k_l})).
        \end{align*}
        Thus, there is some $t_{\text{min}} > 0$ such that $t_{k_l} \geq t_{\text{min}}$ for all $l \in \N$. By the first Wolfe condition \eqref{eq:Wolfe_1}, this implies that
        \begin{align*}
            f(x^{k_l + 1}) - f(x^{k_l}) 
            \leq c_1 t_{k_l} \nabla f(x^{k_l})^\top p^{k_l}
            < - c_1 t_{\text{min}} C
            < 0
            \quad \forall l \in \N,
        \end{align*}
        i.e., the objective value decreases by at least a constant amount infinitely many times. Since $(x^k)_k$ has a limit (by \ref{enum:B1_2}), this contradicts the continuity of $f$. \\
        \textbf{Part 2:} Let $i \in I$. By \ref{enum:B1_3} there is a strictly increasing, infinite sequence $(k_l)_l \subseteq \N$ such that $i \in I_g(x^{k_l})$ for all $l \in \N$. Since $(H_k)_k$ is bounded above by \ref{enum:B1_4}, we can assume w.l.o.g.\ that $(H_{k_l})_l$ converges to some $\bar{H}$. By Part 1, we obtain
        \begin{align*}
            0
            &= \lim_{l \rightarrow \infty} \nabla f(x^{k_l})^\top p^{k_l}
            = -\lim_{l \rightarrow \infty} \nabla f(x^{k_l})^\top H_{k_l} \nabla f(x^{k_l})
            = -\lim_{l \rightarrow \infty} \nabla f_i(x^{k_l})^\top H_{k_l} \nabla f_i(x^{k_l}) \\
            &= -\nabla f_i(\bar{x})^\top \bar{H} \nabla f_i(\bar{x}).
        \end{align*}
        Since $\bar{H}$ is symmetric and positive semidefinite, this implies that $\nabla f_i(\bar{x}) \in \ker(\bar{H})$ (see, e.g., \cite{A2024}, 7.43). Application of Lemma \ref{lem:ker_H_subsequence} completes the proof.
    \end{proof}

    For the function $f : \R^2 \rightarrow \R$, $x \mapsto x_1^2 + |x_2|$ from \cite{GL2018}, it is easy to see that the only point at which both selection functions ($x \mapsto x_1^2 + x_2$ and $x \mapsto x_1^2 - x_2$) are active with linearly dependent gradients is the minimum $x^* = 0 \in \R^2$. However, in general, since convex combinations are a special case of linear combinations, linear dependence of $\nabla f_i(\bar{x})$, $i \in I$, is merely a necessary condition for criticality. One way to guarantee that the vanishing linear combination of the gradients is actually a convex combination is to assume that $f$ has a minimum at which the vanishing convex combination of gradients is ``stable'' in the following sense:
    \begin{assum} \label{assum:A2}
        The function $f : \R^n \rightarrow \R$ satisfies \ref{assum:A1} and
        \begin{enumerate}[leftmargin=1.3cm,label=(A2.\arabic*)]
            \item \label{enum:A2_1} $f$ has a critical point $x^*$ with
                \begin{itemize}
                    \item $\exists \alpha \in \R^m$ with $\alpha_i > 0$ for all $i \in I$ and $\sum_{i = 1}^m \alpha_i \nabla f_i(x^*) = 0$,
                    \item the vectors $\nabla f_i(x^*)$, $i \in I$, are affinely independent,
                \end{itemize}
            \item \label{enum:A2_2} $x^*$ is the unique global minimum and there is some $z \in \R^n \setminus \{ x^* \}$ such that $\calL(z) := \{ x \in \R^n : f(x) \leq f(z) \}$ is bounded.
        \end{enumerate}
    \end{assum}

    The following lemma shows that \ref{enum:A2_1} assures that all vanishing linear combinations of the gradients of active selection functions locally around $x^*$ must be convex combinations:
    \begin{lemma} \label{lem:crit_limit}
        Assume that $f$ satisfies \ref{assum:A1} and \ref{enum:A2_1}. Then there is an open neighborhood $U \subseteq \R^n$ of $x^*$ such if $x \in U$ with $I_e(x) = I = \{1,\dots,m\}$ and $\nabla f_i(x)$, $i \in I$, linearly dependent, then $x$ is a critical point of $f$.
    \end{lemma}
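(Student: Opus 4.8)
The plan is to translate criticality into a statement about barycentric coordinates and then propagate an open positivity condition from $x^*$ to nearby points. Recall from Section~\ref{sec:preliminaries} that $x$ is critical iff $0 \in \partial f(x) = \conv(\{\nabla f_i(x) : i \in I_e(x)\})$, so under the hypothesis $I_e(x) = I$ it suffices to show $0 \in \conv(\{\nabla f_i(x) : i \in I\})$. The guiding observation is that at $x^*$ the vector $0$ lies in the \emph{relative interior} of the simplex $\conv(\{\nabla f_i(x^*)\})$: normalizing the coefficients from \ref{enum:A2_1}, the numbers $\bar\alpha_i := \alpha_i / \sum_{j} \alpha_j$ satisfy $\bar\alpha_i > 0$, $\sum_i \bar\alpha_i = 1$, and $\sum_i \bar\alpha_i \nabla f_i(x^*) = 0$, so $(\bar\alpha_i)_i$ are the (unique, by affine independence) barycentric coordinates of $0$. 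Strict positivity of these coordinates is the property I want to carry over to $x$ near $x^*$.

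First I would fix the affine-independence regime. Affine independence of $\nabla f_1(x), \dots, \nabla f_m(x)$ is equivalent to linear independence of the differences $\nabla f_i(x) - \nabla f_1(x)$, $i \in \{2,\dots,m\}$, i.e., to a certain matrix having full column rank; by \ref{assum:A1} the gradients $\nabla f_i$ are continuous, and full rank is preserved under small perturbations, so there is an open neighborhood $U_0$ of $x^*$ on which $\nabla f_1(x), \dots, \nabla f_m(x)$ are affinely independent for every $x \in U_0$.

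The key technical point is that the barycentric coordinates of $0$ are defined only when $0 \in \aff(\{\nabla f_i(x)\})$, i.e.\ precisely on the set where the gradients are linearly dependent, which need not be open, so a naive continuity argument is unavailable. To circumvent this, for $x \in U_0$ I would instead consider the constrained least-squares problem $\min \{ \| \sum_i \beta_i \nabla f_i(x) \|^2 : \sum_i \beta_i = 1 \}$. Affine independence makes this problem strictly convex on the constraint hyperplane, so it has a unique minimizer $\gamma(x) \in \R^m$ which, via its KKT system, solves a linear system whose (bordered) matrix is nonsingular and depends continuously on $x$; hence $x \mapsto \gamma(x)$ is continuous on $U_0$. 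At $x^*$ the attained minimum is $0$ (because $0 \in \aff(\{\nabla f_i(x^*)\})$), so by uniqueness $\gamma(x^*) = \bar\alpha$, which has all entries positive. Shrinking $U_0$ to a neighborhood $U$ on which every component $\gamma_i(x)$ remains positive completes the construction of $U$.

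It remains to verify the conclusion on $U$. Suppose $x \in U$, $I_e(x) = I$, and $\nabla f_i(x)$, $i \in I$, are linearly dependent, say $\sum_i c_i \nabla f_i(x) = 0$ with $c \neq 0$. If $\sum_i c_i = 0$, then $\sum_{i=2}^m c_i (\nabla f_i(x) - \nabla f_1(x)) = 0$ would force $c = 0$ by affine independence, a contradiction; hence $\sum_i c_i \neq 0$, and $\beta := c / \sum_j c_j$ satisfies $\sum_i \beta_i = 1$ and $\sum_i \beta_i \nabla f_i(x) = 0$. Thus the least-squares minimum at this $x$ equals $0$ and is attained at $\beta$, so by uniqueness $\beta = \gamma(x)$, whose entries are all positive. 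Therefore $0 = \sum_i \gamma_i(x) \nabla f_i(x)$ is a convex combination, giving $0 \in \conv(\{\nabla f_i(x)\}) = \partial f(x)$ and hence criticality of $x$. The main obstacle, as indicated, is the non-openness of the linear-dependence set, which the least-squares (projection) reformulation resolves by producing a coordinate map defined and continuous on a full neighborhood and agreeing with the genuine barycentric coordinates wherever $0 \in \aff(\{\nabla f_i(x)\})$.
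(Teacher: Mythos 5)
Your proof is correct, and it takes a genuinely different route from the paper's. The paper argues by contradiction and compactness: it supposes a sequence $x^l \to x^*$ of non-critical points with linearly dependent gradients, normalizes the dependence coefficients $\beta^l$ in the $\ell_\infty$-norm, extracts a convergent subsequence, and uses affine independence at $x^*$ twice — once to rule out $\sum_i \bar\beta_i = 0$ and once to force the normalized limit to equal $\alpha$ — so that the $\beta^l_i$ are eventually positive, contradicting non-criticality. You instead construct a single continuous coordinate map $\gamma$ on a full neighborhood via the equality-constrained least-squares problem, verify $\gamma(x^*) = \bar\alpha > 0$, and shrink the neighborhood so positivity persists; criticality then follows directly wherever a vanishing affine combination exists, since uniqueness of the minimizer forces it to coincide with $\gamma(x)$. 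The two arguments pivot on the same facts (affine independence makes the normalized vanishing combination unique and rules out $\sum_i c_i = 0$, and continuity propagates the strict positivity of $\alpha$), but yours is direct rather than by contradiction, is constructive (it exhibits the convex coefficients explicitly and uniformly over $U$), and makes explicit why the non-openness of the linear-dependence locus is harmless — a point the paper handles only implicitly through the subsequence extraction. The price is a slightly heavier setup (the bordered KKT system and its nonsingularity), whereas the paper's sequential argument is shorter. Both are valid; no gaps.
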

    \begin{proof}
        Assume that this does not hold. Then there is a sequence $(x^l)_l \subseteq \R^n$ with $x^l \rightarrow x^*$, $I_e(x^l) = I$, and $\nabla f_i(x^l)$, $i \in I$, linearly dependent for all $l \in \N$, such that $x^l$ is not critical for any $l \in \N$. Linear dependence implies that there is a sequence $(\beta^l)_l \subseteq \R^m \setminus \{ 0 \}$ with $\sum_{i = 1}^m \beta_i^l \nabla f_i(x^l) = 0$ for all $l \in \N$.
        Assume w.l.o.g.\ (via scaling) that $\| \beta^l \|_\infty := \max_{i \in I} |\beta_i| = 1$ for all $l \in \N$. Then we can assume w.l.o.g.\ that $(\beta^l)_l$ has a limit $\bar{\beta} \in \R^m$ with $\| \bar{\beta} \|_\infty = 1$. By continuity of $\nabla f_i$, $i \in I$, we have $\sum_{i = 1}^m \bar{\beta}_i \nabla f_i(x^*) = 0$. By affine independence of $\nabla f_i(x^*)$, $i \in I$, we must have $\sum_{i = 1}^m \bar{\beta}_i \neq 0$. Let $\beta^* := \bar{\beta}/(\sum_{i = 1}^m \bar{\beta}_i)$. Again using the affine independence, we must have $\beta^* = \alpha$. Now $\alpha_i > 0$ for all $i \in I$ implies that there is some $N \in \N$ such that $\beta_i^l > 0$ for all $i \in I$, $l > N$. This implies that $x^l$ is critical for any $l > N$, which is a contradiction.
    \end{proof}

    To show criticality of the limit $\bar{x}$ via the previous lemma, we have to make sure that $\bar{x}$ lies close enough to the critical point $x^*$ from \ref{enum:A2_1}. Since Alg.\ \ref{algo:QN} is a descent method, we can assure this by assuming that $x^*$ is actually the unique global minimum and that $f$ has compact level sets via \ref{enum:A2_2}. This leads us to the main result of this section: 
    \begin{theorem} \label{thm:criticality}
        Assume that $f$ satisfies \ref{assum:A2}. There is an open neighborhood $U \subseteq \R^n$ of $x^*$ such that if \ref{behav:B1} holds for an initial point $x^0 \in U$, then the limit $\bar{x}$ of $(x^k)_k$ is a critical point of $f$.
    \end{theorem}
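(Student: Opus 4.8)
The plan is to combine the two lemmas that immediately precede the theorem. Lemma~\ref{lem:intermediate_result} already shows that, under \ref{behav:B1}, the limit satisfies $I_e(\bar{x}) = I$ and that the gradients $\nabla f_i(\bar{x})$, $i \in I$, are linearly dependent; Lemma~\ref{lem:crit_limit} then upgrades linear dependence to criticality, but only on some neighborhood $V$ of $x^*$ (the one produced by that lemma). Hence the whole content of the theorem reduces to guaranteeing that the limit $\bar{x}$ actually lands inside $V$; everything else is a direct citation of the two lemmas. Note that \ref{behav:B1} must itself be derivable from the hypotheses, so throughout I take \ref{behav:B1} as part of the assumption (as in the statement).

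To localize $\bar{x}$, I would exploit that Alg.~\ref{algo:QN} is a descent method, together with the global structure imposed by \ref{enum:A2_2}. The first Wolfe condition \eqref{eq:Wolfe_1}, combined with $\nabla f(x^k)^\top p^k = -\nabla f(x^k)^\top H_k \nabla f(x^k) < 0$ (which holds since $H_k$ is \spd and $\nabla f(x^k) \neq 0$ by \ref{enum:B1_1}), shows that $(f(x^k))_k$ is monotonically decreasing. Thus $f(x^k) \le f(x^0)$ for all $k$, and by continuity of $f$ together with \ref{enum:B1_2} we get $f(\bar{x}) = \lim_k f(x^k) \le f(x^0)$. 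It therefore suffices to choose the initial neighborhood $U$ of the theorem as a sublevel set of $f$ small enough to be contained in $V$.

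The construction of such a sublevel set is the standard fact that, for a function with a unique global minimizer and compact level sets, sublevel sets form a neighborhood basis of the minimizer, and this is the one genuine obstacle. Using \ref{enum:A2_2}, the set $\calL(z)$ is closed (by continuity of $f$) and bounded, hence compact, so $K := \calL(z) \setminus V$ is compact. Since $x^*$ is the \emph{unique} global minimum and lies in $V$, either $K = \emptyset$, or $f$ attains on $K$ a minimum value $c_0 > f(x^*)$. Picking any threshold $c$ with $f(x^*) < c < \min\{c_0, f(z)\}$ (a nonempty interval, since both $c_0$ and $f(z)$ exceed $f(x^*)$ by uniqueness of the minimizer) then forces $\{ x \in \R^n : f(x) \le c \} \subseteq V$: any such $x$ lies in $\calL(z)$ because $c \le f(z)$, and cannot lie in $K$ because $f(x) \le c < c_0$. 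Setting $U := \{ x \in \R^n : f(x) < c \}$ gives an open neighborhood of $x^*$, and for any $x^0 \in U$ the descent estimate yields $f(\bar{x}) \le f(x^0) < c$, so $\bar{x} \in V$.

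Finally, with $\bar{x} \in V$, I would invoke Lemma~\ref{lem:intermediate_result} to obtain $I_e(\bar{x}) = I$ and linear dependence of the vectors $\nabla f_i(\bar{x})$, $i \in I$, and then apply Lemma~\ref{lem:crit_limit} at the point $\bar{x} \in V$ to conclude that $\bar{x}$ is a critical point of $f$. The only subtlety to watch is purely topological, namely that the threshold $c$ can be selected simultaneously below $c_0$ and below $f(z)$, together with the degenerate case $K = \emptyset$; the analytic heart of the argument is entirely absorbed into the preceding lemmas.
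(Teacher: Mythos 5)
Your proposal is correct and follows essentially the same route as the paper: invoke Lemma~\ref{lem:intermediate_result} for linear dependence of the gradients at $\bar{x}$, Lemma~\ref{lem:crit_limit} to upgrade this to criticality near $x^*$, and the descent property plus \ref{enum:A2_2} to trap $\bar{x}$ in the relevant neighborhood. The only difference is that you spell out in detail the construction of the sublevel-set neighborhood (compactness of $\calL(z)$, uniqueness of the minimizer, choice of the threshold $c$), which the paper simply asserts as a consequence of \ref{enum:A2_2}.
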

    \begin{proof}
        Let $U'$ be the open neighborhood of $x^*$ from Lemma \ref{lem:crit_limit}. By \ref{enum:A2_2} there must be an open neighborhood $U \subseteq \R^n$ of $x^*$ such that $\calL(x^0) \subseteq U'$ for all $x^0 \in U$. Since Alg.\ \ref{algo:QN} is a descent method, $x^0 \in U$ implies $\lim_{k \rightarrow \infty} x^k = \bar{x} \in \calL(x^0) \subseteq U'$. Application of Lemma \ref{lem:intermediate_result} shows that $I_e(\bar{x}) = I$ and the vectors $\nabla f_i(\bar{x})$, $i \in I$, must be linearly dependent. Application of Lemma \ref{lem:crit_limit} completes the proof.
    \end{proof}

\section{Exploration of the piecewise structure} \label{sec:explore_structure}

    For a function $f$ satisfying \ref{assum:A2}, since the gradients of all selection functions are required for the vanishing convex combination of gradients at $x^*$, $x^*$ cannot be a minimum of any continuous selection of a strict subset of selection functions. As a result, any algorithm that is able to minimize such functions must be able to gather information of every selection function during execution. More formally, it must be able to find at least one point from the set $\{ x \in \R^n : i \in I_g(x) \}$ for each $i \in I$. In this section, we show how quasi-Newton methods can achieve this. More precisely, the main result of this section is that if $f$ satisfies \ref{assum:A2}, the quasi-Newton matrices behave in the ``expected way'', and the initial $x^0$ is close enough to $x^*$, then the algorithm visits each of these sets exactly once in the first $m-1$ iterations, i.e.,  
    \begin{align*}
        \bigcup_{k = 0}^{m-1} I_g(x^k) = I = \{1,\dots,m\}.
    \end{align*}
    
    In the following, we first introduce the behavioral assumptions we need to prove this. Clearly, information about all selection functions is only required at points close to the minimum $x^*$. As such, we now focus on the behavior of Alg.\ \ref{algo:QN} when applied to an initial point $x^0$ that is close to $x^*$. By \ref{enum:A2_2} the level set $\calL(x)$ shrinks towards $x^*$ as $x \rightarrow x^*$. Since Alg.\ \ref{algo:QN} is a descent method, it holds $(x^k)_k \subseteq \calL(x^0)$. Thus, for $x^0$ close to $x^*$, all $s^k = x^{k+1} - x^k$ in Alg.\ \ref{algo:QN} must be small. As discussed at the beginning of Section \ref{sec:limit_critical}, if $i \in I_g(x^k)$ and $j \in I_g(x^{k+1})$ with $i \neq j$, then $s^k$ being small means that the secant equation \eqref{eq:secant_equation} forces an eigenvalue of $H_{k+1}$ to be small. For simplicity, assume that $H_0$ is the identity matrix $\id$. Then we expect that after $k \in \{0,\dots,m-1\}$ such updates to $H_0$, the resulting $H_k$ has at most $k$ small eigenvalues. The following example suggests that for the BFGS method, applied to the function already considered in Example \ref{example:criticality}, this is indeed the case:
    \begin{example} \label{example:exploration}
        Consider the function $f$ from Example \ref{example:criticality} for $n = 10$ and $m = 6$. We generate a random instance of this function and apply $m-1 = 5$ iterations of the BFGS method with $c_1 = 10^{-4}$, $c_2 = 0.5$, and $H_0 = \id$ to each of $100$ different initial points. The initial points are chosen randomly, but with specified distances to $x^* = 0$, such that the values $\log_{10}(\| x^0 - x^* \|)$ are equidistant in $[-30,2]$. (For details on the random generation, see the corresponding code.) The results are computed with $500$ significant digits via Matlab's variable-precision arithmetic.
        \begin{figure}
            \centering
            \parbox[b]{0.49\textwidth}{
                \centering 
                \includegraphics[width=0.45\textwidth]{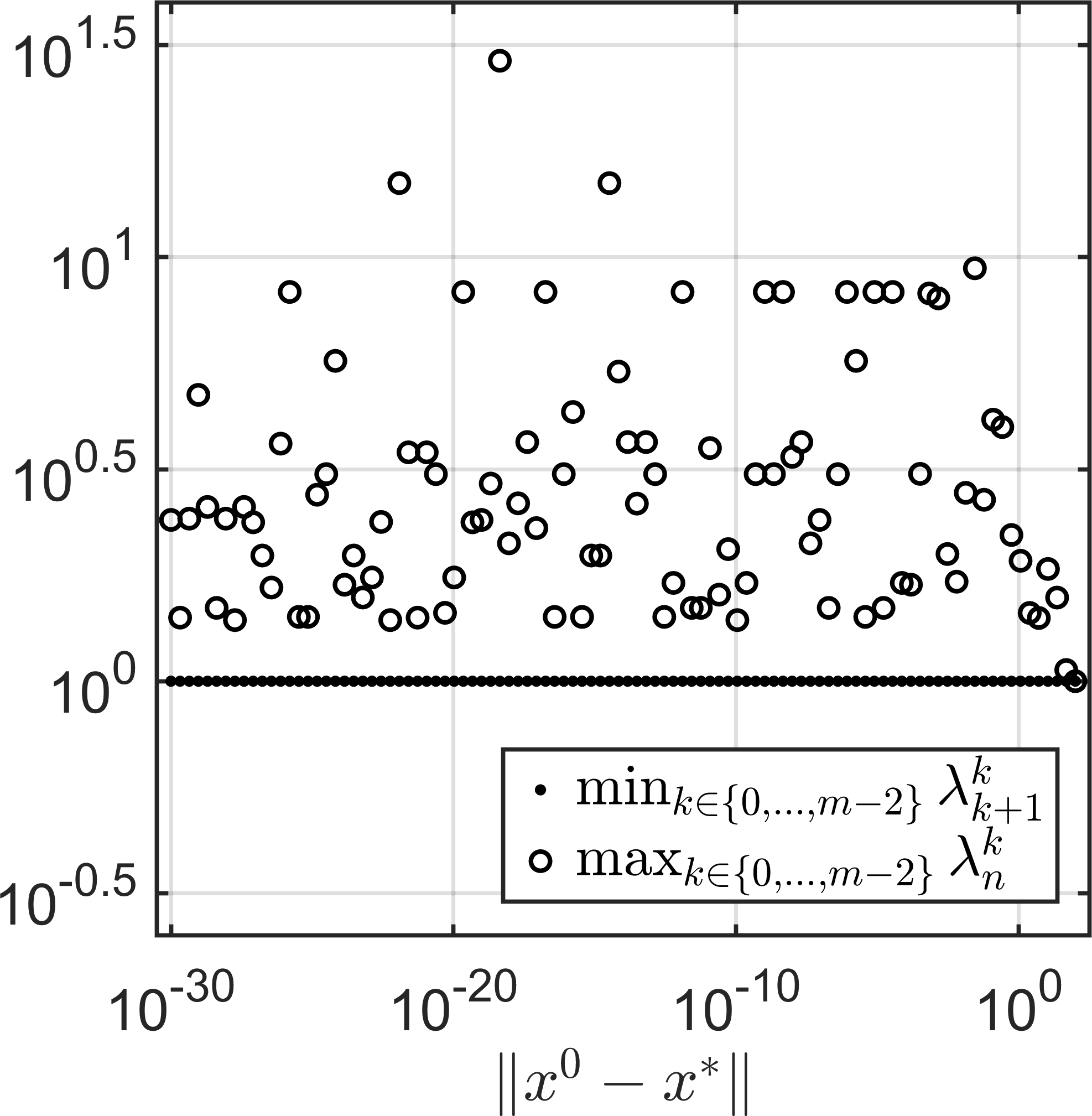}\\
                (a)
    		}
            \parbox[b]{0.49\textwidth}{
                \centering 
                \includegraphics[width=0.45\textwidth]{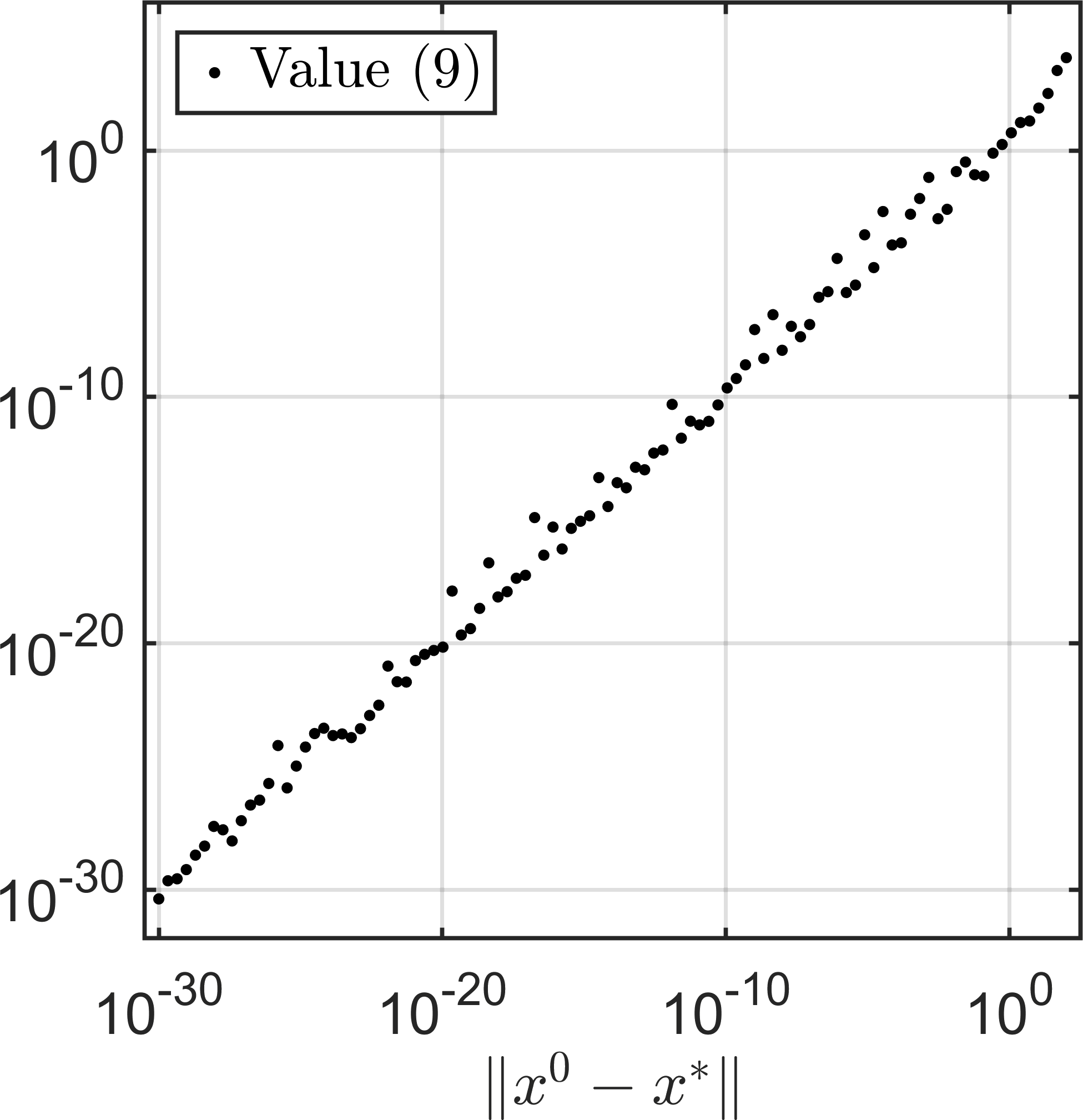}\\
                (b)
    		}
            \caption{(a) The relevant eigenvalues of $(H_k)_k$ in the first $m-1$ iterations in Example \ref{example:exploration} for initial points closer and closer to $x^*$. (Note that the dots are all exactly on the value $10^0 = 1$.) (b) The value \eqref{eq:theta_numerically} for the same initial points.}
            \label{fig:example_exploration}
        \end{figure}
        For each of these runs, Fig.\ \ref{fig:example_exploration}(a) shows, for $k \in \{0,\dots,m-2\}$, the smallest $(k+1)$-th eigenvalue of any $H_k$ (i.e., $\min_{k \in \{0,\dots,m-2\}} \lambda_{k+1}^k$) and the largest eigenvalue of any $H_k$ (i.e., $\max_{k \in \{0,\dots,m-2\}} \lambda_n^k$), plotted against the distance of the corresponding initial point to $x^*$. These values appear to be bounded below and above, respectively. Fig.\ \ref{fig:example_exploration}(b) shows the value
        \begin{align} \label{eq:theta_numerically}
            \max_{k' \in \{1,\dots,m-2\}}
            \max_{k \in \{0,\dots,k'-1\}}
            \| H_{k'} y^k \|
        \end{align}
        for every run, which appears to vanish.
    \end{example}
    Example \ref{fig:example_exploration} also suggests that the largest eigenvalue of $H_k$ is bounded over all runs. Furthermore, crucially, it suggests that the value \eqref{eq:theta_numerically} vanishes as $x^0$ approaches $x^*$. For $k = k'-1$, the secant equation \eqref{eq:secant_equation} in iteration $k'$ yields $H_{k'} y^{k} = H_{k'} y^{k'-1} = s^{k'-1}$, so $H_{k'} y^{k}$ vanishes as $x^0$ approaches $x^*$ (by \ref{enum:A2_2}, as discussed above). Now \eqref{eq:theta_numerically} vanishing means that the same is true for any $k \in \{0,\dots,k'-1\}$. This suggests that, similar to the observation in Example \ref{example:criticality}, the BFGS update causes the quasi-Newton matrix to ``memorize'' previous secant equations.
    
    As in Section \ref{sec:limit_critical}, we now rephrase the above observation as a formal assumption on the behavior of $(H_k)_k$. We do this by considering a sequence $(x^{l,0})_l \subseteq \R^n$ of initial points for Alg.\ \ref{algo:QN} with $\lim_{l \rightarrow \infty} x^{l,0} = x^*$. For $l \in \N$, we denote the sequences generated by the algorithm with initial point $x^{l,0}$ by $(x^{l,k})_k$, $(H_{l,k})_k$, $(s^{l,k})_k$, $(y^{l,k})_k$, $(p^{l,k})_k$, and $(t_{l,k})_k$, respectively, and the sorted eigenvalues of $H_{l,k}$ (in increasing order) by $\lambda_j^{l,k}$, $j \in \{1,\dots,n\}$.
    \begin{behav} \label{behav:B2}
        For a function satisfying \ref{assum:A2} and for a sequence $(x^{l,0})_l \subseteq \R^n$ of initial points with $\lim_{l \rightarrow \infty} x^{l,0} = x^*$, assume that there are $\sigma_L, \sigma_U \in \R^{>0}$ such that for each $l \in \N$, Alg.\ \ref{algo:QN}, with fixed parameters $c_1$ and $c_2$, does not stop in the first $m-1$ iterations, and it holds
        \begin{enumerate}[leftmargin=1.25cm,label=(B2.\arabic*)]
            \item \label{enum:B2_1}
            \begin{align*}
                \lambda_j^{l,k} \in [\sigma_L, \sigma_U] \quad \forall j \in \{k+1,\dots,n\}, k \in \{0,\dots,m-2\},
            \end{align*}
            \item \label{enum:B2_2}
            \begin{align*}
                \lim_{l \rightarrow \infty} H_{l,k'} y^{l,k} = 0
                \quad \forall k' \in \{1,\dots,m-2\}, k \in \{0,\dots,k'-1\}.
            \end{align*}
        \end{enumerate}
    \end{behav}

    To prove the main result of this section, we require the following technical lemma:

    \begin{lemma} \label{lem:aff_lin_indep_and_A_g_singleton}
        Assume that $f$ satisfies \ref{assum:A1}.
        \begin{enumerate}[label=(\alph*)]
            \item There is an open neighborhood $U \subseteq \R^n$ of $x^*$ such that $|I_g(x)| = 1$ for all $x \in U \setminus \Omega$.
            \item If $f$ satisfies \ref{enum:A2_1}, then for any $i^* \in I$, the gradients $\nabla f_i(x^*)$, $i \in I \setminus \{ i^* \}$, are linearly independent.
        \end{enumerate}
    \end{lemma}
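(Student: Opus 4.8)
The plan is to treat the two parts independently, relying on continuity of the gradients for (a) and on a short affine-independence argument for (b); throughout I take the point $x^*$ and the affine independence of $\nabla f_i(x^*)$, $i \in I$, to be supplied by \ref{enum:A2_1}, since this is the only place $x^*$ is introduced. For part (a), the key observation is that affine independence of $\nabla f_i(x^*)$, $i \in I$, forces these vectors to be pairwise distinct, because two coinciding gradients would already form a nontrivial affine dependence. First I would set $\delta := \min_{i \neq j} \| \nabla f_i(x^*) - \nabla f_j(x^*) \| > 0$. Since every $f_i$ is $\C^1$ by \ref{assum:A1}, each $\nabla f_i$ is continuous, so there is an open neighborhood $U$ of $x^*$ on which $\| \nabla f_i(x) - \nabla f_i(x^*) \| < \delta/3$ for all $i \in I$. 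A triangle-inequality estimate then yields $\| \nabla f_i(x) - \nabla f_j(x) \| \geq \delta/3 > 0$ for all $x \in U$ and $i \neq j$, i.e., the gradients remain pairwise distinct on $U$. For $x \in U \setminus \Omega$, the set $I_g(x)$ is non-empty by \cite{U2002}, Proposition 2.24, and by its definition \eqref{eq:def_Ag} any two $i, j \in I_g(x)$ satisfy $\nabla f_i(x) = \nabla f(x) = \nabla f_j(x)$; pairwise distinctness forces $i = j$, so $|I_g(x)| = 1$.

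For part (b), I would argue by contradiction. Suppose that for some $i^* \in I$ the $m-1$ vectors $\nabla f_i(x^*)$, $i \in I \setminus \{ i^* \}$, are linearly dependent. Then there is a nontrivial $(\beta_i)_{i \neq i^*}$ with $\sum_{i \neq i^*} \beta_i \nabla f_i(x^*) = 0$; extending by $\beta_{i^*} := 0$ yields a nonzero $\beta \in \R^m$ with $\sum_{i=1}^m \beta_i \nabla f_i(x^*) = 0$. Writing $A := \sum_i \alpha_i > 0$ (positive since every $\alpha_i > 0$) and $B := \sum_i \beta_i$, I distinguish two cases. If $B = 0$, then $\beta$ is a nontrivial affine dependence of the $\nabla f_i(x^*)$, contradicting their affine independence. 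If $B \neq 0$, I form $\gamma := A\beta - B\alpha$, which satisfies $\sum_i \gamma_i \nabla f_i(x^*) = 0$ and $\sum_i \gamma_i = AB - BA = 0$; affine independence then gives $\gamma = 0$, hence $\beta_i = (B/A)\alpha_i$ for all $i$. Evaluating at $i^*$ gives $0 = \beta_{i^*} = (B/A)\alpha_{i^*}$, which is impossible since $B \neq 0$ and $\alpha_{i^*} > 0$. This contradiction closes the argument.

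The continuity/triangle-inequality estimate in (a) is routine; its only subtlety is recognizing that affine independence already delivers pairwise distinctness and that the non-emptiness of $I_g(x)$ from the cited result is exactly what prevents the singleton conclusion from being vacuous. In (b), the crux, and the step I expect to require the most care, is the choice of the combination $\gamma = A\beta - B\alpha$: it is engineered so that its coefficients sum to zero, which is precisely what lets affine independence be invoked and thereby converts the strict positivity of $\alpha$ into the desired linear independence after deleting the $i^*$-th gradient.
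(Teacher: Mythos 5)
Your proof is correct and follows essentially the same route as the paper: continuity of the gradients to propagate (a consequence of) the affine independence at $x^*$ into a neighborhood for part (a), and a normalization-plus-uniqueness argument against $\alpha$ for part (b). The only differences are cosmetic — in (a) you track pairwise distinctness quantitatively where the paper keeps full affine independence on $U$, and in (b) your combination $\gamma = A\beta - B\alpha$ handles the normalization of $\alpha$ slightly more carefully than the paper's direct identification $\beta^* = \alpha$.
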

    \begin{proof}
        \textbf{(a)} By continuity of $\nabla f_i$, $i \in I$, there is an open neighborhood $U \subseteq \R^n$ of $x^*$ such that the vectors $\nabla f_i(x)$, $i \in I$, are affinely independent for all $x \in U$. In particular, for $x \in U \setminus \Omega$, $i' \in I_g(x)$, and all $i'' \in I \setminus \{ i' \}$, it holds $\nabla f(x) = \nabla  f_{i'}(x) \neq \nabla f_{i''}(x)$, which completes the proof. \\
        \textbf{(b)} Assume that this does not hold for some $i^* \in I$. Then there are $\beta_i \in \R$, $i \in I \setminus \{ i^* \}$, such that $\sum_{i = 1, i \neq i^*}^{m-1} \beta_i \nabla f_i(x^*) = 0$ and $\beta_i \neq 0$ for some $i \in I \setminus \{ i^* \}$. Define $\beta_{i^*} := 0$ and $\beta := (\beta_1,\dots,\beta_m)^\top$. By affine independence, we must have $\sum_{i = 1}^m \beta_i \neq 0$. Let $\beta^* := \beta / (\sum_{i = 1}^m \beta_i)$. Again using affine independence, we must have $\beta^* = \alpha$. But this is a contradiction, since $0 = \beta^*_{i^*} = \alpha_{i^*} > 0$ by \ref{enum:A2_1}.
    \end{proof}

    Lemma \ref{lem:aff_lin_indep_and_A_g_singleton}(a) implies that close to $x^*$, the gradient at every iterate belongs to a unique selection function. This allows us to consider the order in which the algorithm discovers the selection functions, which is the starting point for the proof of our second main result. The remainder of the proof is similar to the proof of Lemma \ref{lem:intermediate_result}, but with taking the limit $l \rightarrow \infty$ instead of $k \rightarrow \infty$:
    \begin{theorem} \label{thm:exploration}
        Assume that $f$ satisfies \ref{assum:A2} and let $(x^{l,0})_l$ be a sequence as in \ref{behav:B2}. Then there is some $N > 0$ such that for all $l > N$, it holds
        \begin{align} \label{eq:learning_property}
            \bigcup_{k = 0}^{m-1} I_g(x^{l,k}) = I = \{1,\dots,m\}.
        \end{align}
    \end{theorem}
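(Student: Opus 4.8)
The plan is to mirror the proof of Lemma \ref{lem:intermediate_result}, replacing the asymptotic limit $k \to \infty$ by the limit $l \to \infty$ of initial points approaching $x^*$. First I would record two preliminaries. Since $x^{l,0} \to x^*$ and $\calL(x^{l,0})$ shrinks to $\{x^*\}$ as $l \to \infty$ (by \ref{enum:A2_2}), and Alg.\ \ref{algo:QN} is a descent method, each fixed iterate satisfies $x^{l,k} \to x^*$ as $l \to \infty$ for $k \in \{0,\dots,m-1\}$; in particular $f(x^{l,k+1}) - f(x^{l,k}) \to 0$ and $\nabla f_i(x^{l,k}) \to \nabla f_i(x^*)$. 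Moreover, for $l$ large all of $x^{l,0},\dots,x^{l,m-1}$ lie in the neighborhood from Lemma \ref{lem:aff_lin_indep_and_A_g_singleton}(a) and are not in $\Omega$ (the algorithm does not stop in the first $m-1$ iterations), so each $I_g(x^{l,k})$ is a singleton $\{j_k\}$. As $I$ has exactly $m$ elements, \eqref{eq:learning_property} is then equivalent to $j_0,\dots,j_{m-1}$ being pairwise distinct.

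Next I would argue by contradiction. If the claim fails, then since there are finitely many tuples $(j_0,\dots,j_{m-1}) \in I^m$, one may pass to a subsequence in $l$ along which this tuple is constant and contains a repetition. Let $k'$ be the smallest index with $j_{k'} = j_{k^*}$ for some $k^* < k'$, so that $j_0,\dots,j_{k'-1}$ are distinct and $1 \le k' \le m-1$. After a further subsequence I may assume $H_{l,k'-1} \to \bar H$ (bounded by \ref{enum:B2_1}). The key is to characterize $\ker(\bar H)$. Exactly as in Lemma \ref{lem:ker_H_subsequence}, \ref{enum:B2_2} together with $y^{l,k} \to \nabla f_{j_{k+1}}(x^*) - \nabla f_{j_k}(x^*)$ gives $\bar H(\nabla f_{j_{k+1}}(x^*) - \nabla f_{j_k}(x^*)) = 0$ for $k \in \{0,\dots,k'-2\}$, so $\ker(\bar H) \supseteq V := \spn\{\nabla f_{j_{k+1}}(x^*) - \nabla f_{j_k}(x^*) : k = 0,\dots,k'-2\}$, while \ref{enum:B2_1} and continuity of eigenvalues give $\dim\ker(\bar H) \le k'-1$. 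Since $j_0,\dots,j_{k'-1}$ are $k' \le m-1$ distinct indices, the gradients $\nabla f_{j_0}(x^*),\dots,\nabla f_{j_{k'-1}}(x^*)$ are linearly independent by Lemma \ref{lem:aff_lin_indep_and_A_g_singleton}(b); hence $\dim V = k'-1$ and therefore $\ker(\bar H) = V$.

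Then I would establish the analog of Part 1 of Lemma \ref{lem:intermediate_result}, namely $\nabla f(x^{l,k'-1})^\top p^{l,k'-1} \to 0$. I apply Lemma \ref{lem:lower_bound_Wolfe} to step $k'-1$ with $i = j_{k'} = j_{k^*} \in I_g(x^{l,k'})$. Its ``extra'' term equals $(\nabla f_{j_{k'-1}}(x^{l,k'-1}) - \nabla f_{j_{k^*}}(x^{l,k'-1}))^\top p^{l,k'-1}$ and vanishes as $l \to \infty$: since both $j_{k'-1}$ and $j_{k^*}$ lie in $\{j_0,\dots,j_{k'-1}\}$, the gradient difference is a telescoping combination of the spanning vectors of $V$, so $H_{l,k'-1}(\nabla f_{j_{k'-1}}(x^*) - \nabla f_{j_{k^*}}(x^*)) \to 0$ by \ref{enum:B2_2} exactly as in Lemma \ref{lem:common_descent}, and $p^{l,k'-1}$ is bounded. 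Consequently, if $\nabla f(x^{l,k'-1})^\top p^{l,k'-1}$ did not tend to $0$, then along a subsequence it would stay below some $-C < 0$, Lemma \ref{lem:lower_bound_Wolfe} would bound $t_{l,k'-1}$ below by a positive constant, and the first Wolfe condition \eqref{eq:Wolfe_1} would make $f(x^{l,k'}) - f(x^{l,k'-1})$ bounded away from $0$, contradicting $f(x^{l,k'}) - f(x^{l,k'-1}) \to 0$.

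Finally, mirroring Part 2 of Lemma \ref{lem:intermediate_result}, from $\nabla f(x^{l,k'-1})^\top p^{l,k'-1} = -\nabla f_{j_{k'-1}}(x^{l,k'-1})^\top H_{l,k'-1} \nabla f_{j_{k'-1}}(x^{l,k'-1}) \to 0$ and $H_{l,k'-1} \to \bar H$ I obtain $\nabla f_{j_{k'-1}}(x^*)^\top \bar H \nabla f_{j_{k'-1}}(x^*) = 0$, so $\nabla f_{j_{k'-1}}(x^*) \in \ker(\bar H) = V$ by positive semidefiniteness. But $V$ is the span of differences of the linearly independent gradients $\nabla f_{j_0}(x^*),\dots,\nabla f_{j_{k'-1}}(x^*)$, i.e.\ it is the kernel of the coefficient-sum functional, whereas $\nabla f_{j_{k'-1}}(x^*)$ has coefficient sum $1$; hence $\nabla f_{j_{k'-1}}(x^*) \notin V$, a contradiction. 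I expect the main obstacle to be the bookkeeping in the kernel characterization — matching $\dim V = k'-1$ with $\dim\ker(\bar H) \le k'-1$ and reducing the needed gradient differences to the consecutive differences controlled by \ref{enum:B2_2}; the remainder is a faithful transcription of the arguments already used for Lemma \ref{lem:intermediate_result}.
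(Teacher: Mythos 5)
Your proposal is correct and follows essentially the same route as the paper's proof: argue by contradiction, fix the order of encountered selection functions along a subsequence, locate the first repetition, identify the kernel of the limiting quasi-Newton matrix with the span of the gradient differences via \ref{enum:B2_1}--\ref{enum:B2_2}, use Lemma \ref{lem:lower_bound_Wolfe} and the Wolfe conditions to force $\nabla f(x^{l,k'-1})^\top p^{l,k'-1} \to 0$, and contradict the linear independence from Lemma \ref{lem:aff_lin_indep_and_A_g_singleton}(b). The only differences are cosmetic (indexing the repetition by $k'$ rather than $k^*+1$, spanning the kernel by consecutive differences at $x^*$ rather than differences from the $k^*$-th gradient, and phrasing the final contradiction via the coefficient-sum functional rather than counting $k^*+1$ dependent vectors).
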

    \begin{proof}
        Assume that this does not hold. Then there must be infinitely many $l \in \N$ for which \eqref{eq:learning_property} is violated. Assume w.l.o.g.\ that \eqref{eq:learning_property} is violated for every $l \in \N$. \\
        \textbf{Part 1:} By Lemma \ref{lem:aff_lin_indep_and_A_g_singleton}(a) and since the algorithm did not break down, we can assume w.l.o.g.\ that $|I_g(x^{l,k})| = 1$ for all $l \in \N$, $k \in \{0,\dots,m-1\}$. Furthermore, since the number of selection functions is finite, we can assume w.l.o.g.\ that the order in which the selection functions are encountered is the same for any $l \in \N$, i.e., we can assume that the vector $(i_0, \dots, i_{m-1})$ with $I_g(x^{l,k}) = \{ i_k \}$ for $k \in \{0,\dots,m-1\}$ is the same for any $l \in \N$. Since we assumed that \eqref{eq:learning_property} is violated, there must be a smallest $k^* \in \{0,\dots,m-2\}$ such that $i_{k^*+1} \in \{i_0, \dots, i_{k^*}\}$. Let $k^\circ \in \{0,\dots,k^*\}$ with $i_{k^*+1} = i_{k^\circ}$. (Then $f_{i_{k^*+1}}$ is the first selection function that is encountered twice along $(x^{l,k})_k$, at iterations $k^\circ$ and $k^*+1$.) \\
        \textbf{Part 2:} For any $k \in \{0, \dots, k^* - 1 \}$, we can write
        \begin{align*}
            \nabla f(x^{l,k^*}) - \nabla f(x^{l,k})
            &= ( \nabla f(x^{l,k^*}) - \nabla f(x^{l,k^*- 1}) ) + ( \nabla f(x^{l,k^*- 1}) - \nabla f(x^{l,k}) ) \\
            &= y^{l,k^*-1}  + ( \nabla f(x^{l,k^*- 1}) - \nabla f(x^{l,k}) )
            = y^{l,k^*-1} + \dots + y^{l,k},
        \end{align*}
        so \ref{enum:B2_2} implies
        \begin{align} \label{eq:proof_thm_learning_3}
            \lim_{l \rightarrow \infty} H_{l,k^*} (\nabla f(x^{l,k^*}) - \nabla f(x^{l,k})) = 0
            \quad \forall k \in \{0, \dots, k^* - 1 \}.
        \end{align}
        Furthermore, for any $k \in \{0, \dots, k^* - 1 \}$, we have
        \begin{equation} \label{eq:proof_thm_learning_2tmp}
            \begin{aligned}
                &H_{l,k^*} (\nabla f(x^{l,k^*}) - \nabla f_{i_{k}}(x^{l,k^*})) \\
                &= H_{l,k^*} (\nabla f(x^{l,k^*}) - \nabla f(x^{l,k})) + H_{l,k^*} (\nabla f_{i_k}(x^{l,k}) - \nabla f_{i_{k}}(x^{l,k^*})).
            \end{aligned}
        \end{equation}
        For $l \rightarrow \infty$, the first summand on the right-hand side of \eqref{eq:proof_thm_learning_2tmp} vanishes by \eqref{eq:proof_thm_learning_3}. The second summand vanishes by boundedness of $(H_{l,k})_l$ (cf.\ \ref{enum:B2_1}), continuity of $\nabla f_{i_k}$ (cf.\ \ref{assum:A1}), and since $\lim_{l \rightarrow \infty} x^{l,k} = x^*$ for all $k \in \N$ (cf.\ \ref{enum:A2_2}). This means that
        \begin{align} \label{eq:proof_thm_learning_2}
            \lim_{l \rightarrow \infty} H_{l,k^*} (\nabla f(x^{l,k^*}) - \nabla f_{i_{k}}(x^{l,k^*})) = 0 \quad \forall k \in \{0, \dots, k^*-1\}.
        \end{align}
        For $k = k^*$, \eqref{eq:proof_thm_learning_2} also holds trivially since $\nabla f(x^{l,k^*}) = \nabla f_{i_{k^*}}(x^{l,k^*})$. \\
        \textbf{Part 3:} By construction it holds $I_g(x^{l,k^*}) = \{ i_{k^*} \}$ and $I_g(x^{l,k^*+1}) = \{ i_{k^\circ} \}$, so
        application of Lemma \ref{lem:lower_bound_Wolfe} yields
        \begin{align*}
            t_{l,k^*} 
            \geq 
            \underbrace{\frac{1}{L \| p^{l,k^*} \|^2}}_{\text{(I)}}
            \left(
            \underbrace{-(1 - c_2) \nabla f(x^{l,k^*})^\top p^{l,k^*}}_{\text{(II)}}
            +
            \underbrace{(\nabla f(x^{l,k^*}) - \nabla f_{i_{k^\circ}}(x^{l,k^*}))^\top p^{l,k^*}}_{\text{(III)}}
            \right).
        \end{align*} 
        for all $l \in \N$ (where $L$ is a common Lipschitz constant for the gradients of all selection functions locally around $x^*$, cf.\ \ref{assum:A1}). The fraction (I) is bounded below by boundedness of $(H_{l,k})_l$ (cf.\ \ref{enum:B2_1}). The term (III) vanishes for $l \rightarrow \infty$ by \eqref{eq:proof_thm_learning_2}, since $p^{l,k^*} = -H_{l,k^*} \nabla f(x^{l,k^*})$ and $k^\circ \in \{0,\dots,k^*\}$. Regarding (II), if there would be some $C > 0$ with $\nabla f(x^{l,k^*})^\top p^{l,k^*} < -C$ for infinitely many $l \in \N$, then there would be some $t_{\text{min}} > 0$ such that $t_{l,k^*} \geq t_{\text{min}}$ for infinitely many $l \in \N$. By the first Wolfe condition \eqref{eq:Wolfe_1}, this would mean that
        \begin{align*}
            f(x^{l,k^*+1}) - f(x^{l,k^*})
            \leq c_1 t_{l,k^*} \nabla f(x^{l,k^*})^\top p^{l,k^*}
            < -c_1 t_{\text{min}} C
        \end{align*}
        for such $l$. This is a contradiction, since
        \begin{align*}
            0 > f(x^{l,k^*+1}) - f(x^{l,k^*}) \geq f(x^*) - f(x^{l,0}) \rightarrow 0.
        \end{align*}
        Thus, the term (II) must vanish as well, i.e., $\lim_{l \rightarrow \infty} \nabla f(x^{l,k^*})^\top p^{l,k^*} = 0$. \\
        \textbf{Part 4:} By the upper bound in \ref{enum:B2_1}, we can assume w.l.o.g.\ that $(H_{l,k})_l$ has a limit $\bar{H}_{k}$ for any $k \in \{0, \dots, m-2\}$. 
        By the lower bound in \ref{enum:B2_1} and continuity of eigenvalues, we have $\dim(\ker(\bar{H}_{k^*})) \leq k^*$.
        By \eqref{eq:proof_thm_learning_2} it holds
        \begin{align*}
            \bar{H}_{k^*} (\nabla f_{i_{k^*}}(x^*) - \nabla f_{i_{k}}(x^*))
            = 0 \quad \forall k \in \{0, \dots, k^* - 1 \}.
        \end{align*}
        By \ref{enum:A2_1} all vectors $\nabla f_{i_{k^*}}(x^*) - \nabla f_{i_k}(x^*)$, $k \in \{0, \dots, k^* - 1 \}$, are linearly independent. (Recall that $k^*$ is the \emph{smallest} index for which $f_{i_{k^*+1}}$ is encountered twice.) This implies $\dim(\ker(\bar{H}_{k^*})) = k^*$ and
        \begin{align} \label{eq:proof_thm_learning_5}
            \ker(\bar{H}_{k^*})
            = \spn(\{ \nabla f_{i_{k^*}}(x^*) - \nabla f_{i_k}(x^*) : k \in \{0, \dots, k^* - 1 \} \}).
        \end{align}
        By Part 3 we must have
        \begin{align*}
            0
            &= \lim_{l \rightarrow \infty} \nabla f(x^{l,k^*})^\top p^{l,k^*}
            = \lim_{l \rightarrow \infty} -\nabla f(x^{l,k^*})^\top H_{l,k^*} \nabla f(x^{l,k^*}) \\
            &= \lim_{l \rightarrow \infty} -\nabla f_{i_{k^*}}(x^{l,k^*})^\top H_{l,k^*} \nabla f_{i_{k^*}}(x^{l,k^*}) 
            = -\nabla f_{i_{k^*}}(x^*)^\top \bar{H}_{k^*} \nabla f_{i_{k^*}}(x^*).
        \end{align*}
        Since $\bar{H}_{k^*}$ is positive semi-definite, this shows that $\nabla f_{i_{k^*}}(x^*) \in \ker(\bar{H}_{k^*})$ (see, e.g., \cite{A2024}, 7.43). By \eqref{eq:proof_thm_learning_5} this implies $\nabla f_{i_k}(x^*) \in \ker(\bar{H}_{k^*})$ for all $k \in \{0,\dots,k^*-1\}$. Since $\dim(\ker(\bar{H}_{k^*})) = k^*$, the $k^* + 1$ vectors $\nabla f_{i_k}(x^*) \in \ker(\bar{H}_{k^*})$, $k \in \{0,\dots,k^*\}$, must be linearly dependent. This is a contradiction to Lemma \ref{lem:aff_lin_indep_and_A_g_singleton}(b), since $k^* + 1 \leq m - 1$ by construction.        
    \end{proof}

    The behavior described in Theorem \ref{thm:exploration} can be nicely observed when considering quasi-Newton methods with restarts, where the quasi-Newton matrix $H_k$ is periodically reset to the initial $H_0$. 
    This technique is typically employed by conjugate gradient methods to erase old information from the algorithm (see, e.g., \cite{NW2006}, Section 5.2), and has a similar effect here, in that it forces Alg.\ \ref{algo:QN} to ``relearn'' the piecewise structure of the objective. By Theorem \ref{thm:exploration}, close to $x^*$, exactly $m-1$ iterations are required to detect all selection functions of a function satisfying \ref{assum:A2} (since one selection function is already known from the initial point). 
    In the $m$-th iteration, the search direction then yields (sufficient) decrease for all selection functions at the same time, which allows for a significant decrease of the objective value. The following example visualizes this behavior, and even suggests that the BFGS method with restarts every $m$ iterations still converges:
    \begin{example} \label{example:restarts}
        Consider the function $f$ from Example \ref{example:criticality} for $n = 100$ and $m = 80$. We randomly generate an instance of this function and an initial point $x^0 \in \R^n$. (For details on the random generation, see the corresponding code.) We apply $18 \cdot 80 = 1440$ iterations of the BFGS method with restarts every $m$ iterations and $H_0 = \id$. (To be precise, when $k$ is a multiple of $m$, then we set $H_k = H_0 = \id$.) For the Wolfe step length, we use $c_1 = 0.5$ and $c_2 = 0.75$. In contrast to the previous examples, we use Matlab's default accuracy for this experiment.
        \begin{figure}
            \centering
            \parbox[b]{0.32\textwidth}{
                \centering 
                \includegraphics[width=0.32\textwidth]{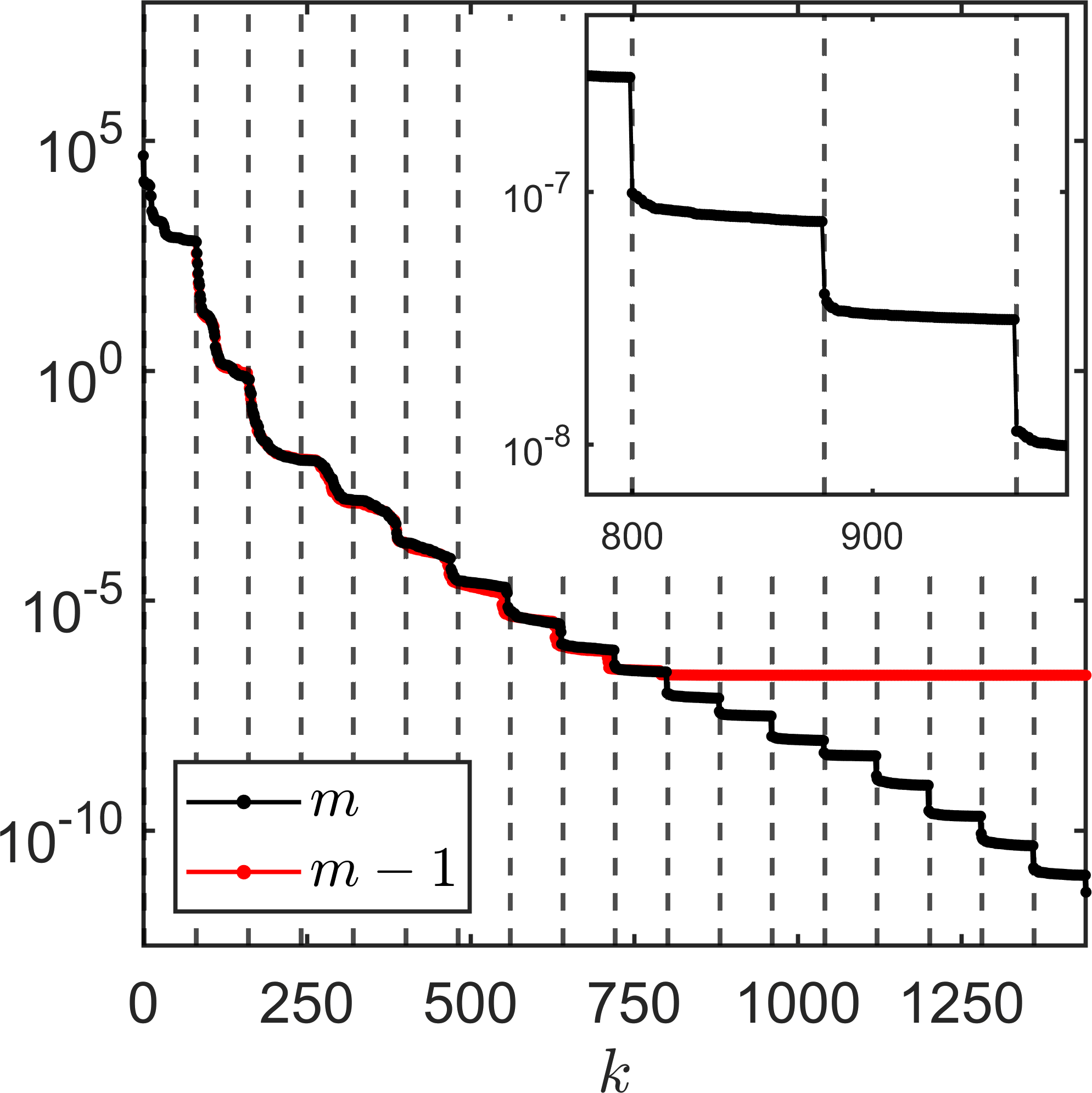}\\
                (a)
    		}
            \parbox[b]{0.32\textwidth}{
                \centering 
                \includegraphics[width=0.30\textwidth]{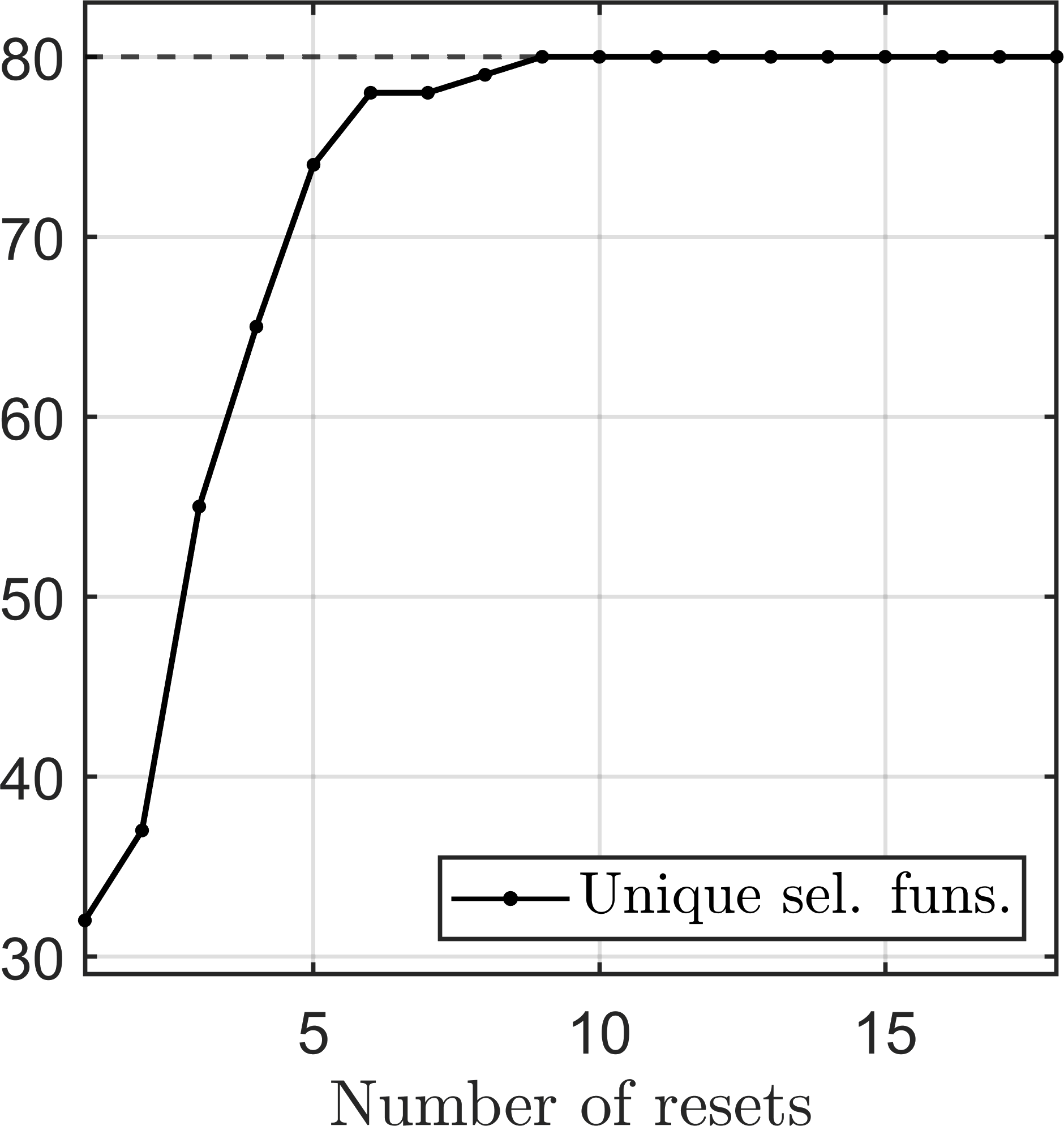}\\
                (b)
    		}
            \parbox[b]{0.32\textwidth}{
                \centering 
                \includegraphics[width=0.32\textwidth]{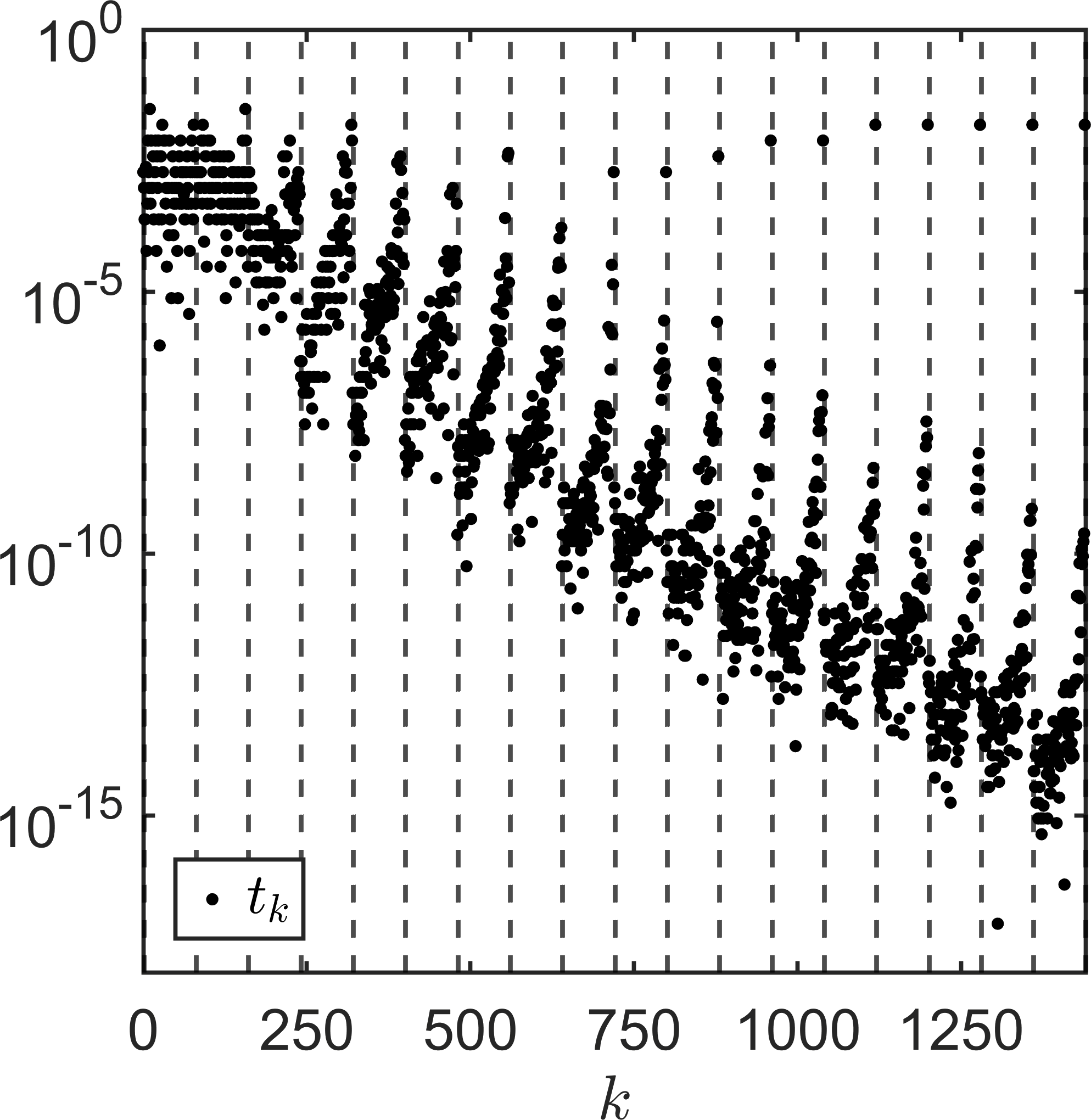}\\
                (c)
    		}
            \caption{(a) The distance of $f(x^k)$ to $f(x^*)$ in Example \ref{example:restarts}, with restarts every $m$ (black) or $m-1$ (red) iterations. (b) The number of unique selection functions encountered between restarts. (c) The step lengths $(t_k)_k$.}
            \label{fig:example_restarts}
        \end{figure}
        Fig.\ \ref{fig:example_restarts}(a) shows, in black, the distance of the objective values of the generated sequence to the optimal value, with dashed, vertical lines highlighting restarts. We see that, as $k$ increases, the objective value decreases in a stepwise fashion.
        Also shown, in red, is the same data, except that the BFGS method is restarted every $m-1$ instead of every $m$ iterations. We see that the method gets stuck and does not converge when restarts are too frequent.
        Fig.\ \ref{fig:example_restarts}(b) shows the number of unique selection functions encountered between restarts, showing that the algorithm successfully discovers all $m = 80$ selection functions after every restart as $k$ increases.
        Finally, Fig.\ \ref{fig:example_restarts}(c) shows the sequence of step lengths $(t_k)_k$. We see that they vanish, except for the final ones at the end of the restart periods.
    \end{example}

    Example \ref{example:restarts} shows that for $x^k$ close to $x^*$, the first $m-1$ iterations after a restart yield almost no decrease, which is reflected in the step lengths being small. (This becomes more pronounced the larger $c_1 \in (0,1)$, which is why we used a larger parameter here compared to Example \ref{example:criticality} and Example \ref{example:exploration}.) Only in the $m$-th iteration, a significant decrease is achieved. This behavior is similar to the gathering of subgradient information in bundle methods: if the current model in these methods is insufficient, then null steps are employed, which are steps that only gather new subgradients to enrich the model and do not actually decrease the objective value. It is also similar to the deterministic gradient sampling strategy \cite{MY2012} (or \cite{G2024}, Section 2.2), where subgradients from the Goldstein $\eps$-subdifferential are iteratively gathered to compute a stabilized descent direction.

\section{Discussion and outlook} \label{sec:outlook}

    In this article, we analyzed the convergence of quasi-Newton methods for piecewise differentiable functions. We showed that when assuming that the quasi-Newton matrix $(H_k)_k$ behaves as it typically does in numerical experiments (specifically for the BFGS method), then convergence results can be derived in a relatively simple way. The first main result (Theorem \ref{thm:criticality}) is the criticality of the limit for a class of well-behaved piecewise differentiable functions (cf.\ \ref{assum:A2}). The second main result (Theorem \ref{thm:exploration}) shows how quasi-Newton methods are able to explore the piecewise structure of such functions locally around the minimum.

    There are several open questions and possibilities for future research:
    \begin{itemize}
        \item The obvious question is whether it is possible to prove that for the BFGS method, the assumption \ref{behav:B1}, specifically \ref{enum:B1_4} and \ref{enum:B1_5}, actually holds in some general setting. The way it is stated in this article, we believe that this is not possible: in case $m = 1$ (i.e., in the smooth case), \ref{enum:B1_4} implies that the condition number of $H_k$ is bounded for $k \in \N$ which, in turn, implies that the angle between the search direction $p^k$ and the gradient $\nabla f(x^k)$ is bounded away from $90^\circ$.
        (For this case, our results essentially reduce to a special case of Zoutendijk's theorem, cf.\ the discussion on p.\ $40$ in \cite{NW2006}.)
        According to \cite{XBN2020}, p.\ $184$, it is not possible to find a bound for this condition number without already knowing that the sequence $(x^k)_k$ converges to a minimum.
        However, Theorem 2.1 in \cite{BN1989} shows that under weak assumptions, the above angle is bounded away from $90^\circ$ for at least a constant fraction of iterations, which is sufficient to prove convergence in the smooth case. If one can show that this still holds in some generalized sense for the nonsmooth case, and if one can generalize the results of Section \ref{sec:limit_critical} to only require \ref{enum:B1_4} and \ref{enum:B1_5} for a constant fraction of iterations, then a proof of convergence for the nonsmooth case may be achievable.
        \item By Section \ref{sec:explore_structure}, performing a small number of iterations of the BFGS method could be seen as a mechanism for exploring the nonsmooth structure of the objective function close to the minimum. As such, it could be inserted into other solution methods, like bundle or gradient sampling methods, as a (heuristic) way to gather subgradients for these methods without the need to solve any (linear or quadratic) subproblems. Note, however, that the step lengths become relatively small (cf.\ Figure \ref{fig:example_restarts}(c)), which may cause numerical issues.
        \item Theorem \ref{thm:exploration} and Example \ref{example:restarts} suggest that knowledge of the previous $m-1$ iterations is sufficient for the BFGS method to achieve convergence. Limiting the information stored in the quasi-Newton matrix in this way is similar to the idea of limited-memory BFGS (L-BFGS) methods (see, e.g., \cite{NW2006}, Section 7.2), where the quasi-Newton matrix is computed from a (typically small) fixed number of recent update pairs $(s^k,y^k)$. In \cite{AO2021}, the behavior of L-BFGS methods on nonsmooth functions was analyzed, with the result that they perform poorly compared to the full BFGS method. The results in Section \ref{sec:explore_structure} may be related to this, as we lose the convergence in Example \ref{example:restarts} already when restarting every $m-1$ instead of every $m$ iterations.
        \item The affine independence in \ref{enum:A2_1} is a strong assumption, as it is violated as soon as $m > n+1$. (For example, even for functions as simple as the $\ell_1$-norm on $\R^2$, this assumption is violated.) For proving a result like Theorem \ref{thm:criticality} under weaker assumptions, one likely has to improve Lemma \ref{lem:intermediate_result} by deriving a stronger property than \eqref{eq:intermediate_result}. For Theorem \ref{thm:exploration}, it may be possible to prove, under weaker assumptions, that the first $\dim(\calN(\bar{x}))+1$ selection functions (cf.\ \eqref{eq:def_calN}) that are encountered along $(x^{l,k})_k$ (for large $l$) have affinely independent gradients.
    \end{itemize}

\vspace{10pt}

\setlength{\bibsep}{0pt plus 0.3ex}

\noindent \textbf{Acknowledgements.} \quad This research was funded by Deutsche Forschungsgemeinschaft (DFG, German Research Foundation) – Projektnummer 545166481.

\bibliography{references}

\end{document}